\documentclass[12pt,reqno]{amsart}

\textheight=21truecm
\textwidth=15truecm
\voffset=-1cm
\hoffset=-1cm

\usepackage{color}
\usepackage{amsmath, amsthm, amssymb}
\usepackage{amsfonts}
\usepackage[ansinew]{inputenc}
\usepackage[dvips]{epsfig}
\usepackage{graphicx}
\usepackage[english]{babel}
\usepackage{enumerate}
\usepackage{hyperref}
\theoremstyle{plain}
\newtheorem{thm}{Theorem}[section]
\newtheorem{cor}[thm]{Corollary}
\newtheorem{lem}[thm]{Lemma}
\newtheorem{prop}[thm]{Proposition}

\theoremstyle{definition}
\newtheorem{defi}[thm]{Definition}

\theoremstyle{remark}
\newtheorem{rem}[thm]{Remark}

\numberwithin{equation}{section}

\newcommand{\de}{\partial}

\newcommand{\fls}{(-\Delta)^s}
\newcommand{\R}{\mathbb{R}}

\newcommand{\N}{\mathbb{N}}

\newcommand{\average}{{\mathchoice {\kern1ex\vcenter{\hrule height.4pt
width 6pt depth0pt} \kern-9.7pt} {\kern1ex\vcenter{\hrule
height.4pt width 4.3pt depth0pt} \kern-7pt} {} {} }}

\def\R{\mathbb{R}}

\begin{document}

\title[Boundary regularity for nonlocal parabolic equations]{Higher-order boundary regularity estimates for nonlocal parabolic equations}

\author{Xavier Ros-Oton}
\address{Universit\"at Z\"urich, Institut f\"ur Mathematik, Winterthurerstrasse, 8057 Z\"urich, Switzerland}
\email{xavier.ros-oton@math.uzh.ch}

\author{Hern\'an Vivas}
\address{Departamento de Matem\'atica, FCEyN, Universidad Nacional de Mar del Plata, Conicet, Dean Funes 3350, 7600 Mar del Plata, Argentina}
\address{University of Texas at Austin, Department of Mathematics, 2515 Speedway, TX 78712 Austin, USA}
\email{havivas@mdp.edu.ar}

\keywords{Boundary regularity, nonlocal parabolic equations.}

\begin{abstract}
We establish sharp higher-order H\"older regularity estimates up to the boundary for solutions to equations of the form $\partial_tu-Lu=f(t,x)$ in $I\times\Omega$ where $I\subset\R$, $\Omega\subset\R^n$ and $f$ is H\"older continuous.
The nonlocal operators $L$ that we consider are those arising in stochastic processes with jumps, such as the fractional Laplacian $(-\Delta)^s$, $s\in(0,1)$.

Our main result establishes that, if $f$ is $C^\gamma$ is space and $C^{\gamma/2s}$ in time, and $\Omega$ is a $C^{2,\gamma}$ domain, then $u/d^s$ is $C^{s+\gamma}$ up to the boundary in space and
$u$ is $C^{1+\gamma/2s}$ up the boundary in time, where $d$ is the distance to $\partial\Omega$.
This is the first higher order boundary regularity estimate for nonlocal parabolic equations, and is new even for the fractional Laplacian in $C^\infty$ domains.
\end{abstract}

\maketitle

\section{Introduction}\label{sec1}

In this paper we address the boundary regularity for solutions of nonlocal parabolic equations of the form
\begin{equation}
\label{eq.maineq}
\partial_tu - L u = f(t, x).	
\end{equation}
Here, $L$ is a nonlocal operator of the form
\begin{equation*}\label{eq.L0}
L u(t, x) = \textrm{P.V.}\int_{\R^n} \bigl(u(t, x+y)-u(t, x)\bigr)K(y)dy
\end{equation*}
with $K(y)=K(-y)$,
\begin{equation*}
0<\frac{\lambda}{|y|^{n+2s}}\leq K(y)\leq\frac{\Lambda}{|y|^{n+2s}}\quad\textrm{ and }\quad K(y)\textrm{ homogeneous.}
\end{equation*}
Equivalently, $L$ can be written as
\begin{equation}
\label{eq.L}
L u(t, x) = \int_{\R^n} \bigl(u(t, x+y)+u(t, x-y)-2u(t, x)\bigr) \frac{a(y/|y|)}{|y|^{n+2s}}dy,
\end{equation}
with $s\in (0,1)$ and $a:S^{n-1}\longrightarrow [\lambda,\Lambda]$, for some {ellipticity constants} $0<\lambda\leq\Lambda$.
In fact, to prove our higher-order regularity estimates we will also require the kernels to be regular, see Theorem \ref{thm.main} below.

Operators of this form arise naturally in the study of L\'evy processes with jumps, where the infinitesimal generators of stable processes take the form \eqref{eq.L}.
These operators have been studied both
from the point of view of Probability and Analysis and have become quite popular to model very different phenomena in Physics, Finance, Image processing, or Ecology; see \cite{Ap04,CT16,MK00,ST94} and references therein.
The most canonical and important example of such operators is given by the case where $a\equiv \textrm{ctt}$ on $S^{n-1}$ and $L$ becomes a multiple of
the fractional Laplacian $-\fls$:
\[L u(x) = c_{n,s}\int_{\R^n} \frac{u(x+y)+u(x-y)-2u(x)}{|y|^{n+2s}}dy,\]
see \cite{BV16,AV17,Ga17,L72,Go16} and references therein.

The regularity theory for integro-differential \emph{elliptic} equations started already in the fifties and sixties, and has experienced a huge development in the last 20 years; see for example \cite{Ro16} and references therein.
In particular, an important question in this context is to understand the boundary regularity of solutions, which presents many differences and challenges with respect to the case of local second order equations.
Such question has been studied in the works \cite{Bo97,BKK08,BKK15,GG15,GG14,RS1,RS2,SW99}, and it is now quite well understood.
For Dirichlet problems of the form
\begin{equation}\label{pb}
\left\{ \begin{array}{rcll}
L u &=&f&\textrm{in }\Omega \\
u&=&0&\textrm{in }\R^n\backslash\Omega,
\end{array}\right.
\end{equation}
with $L$ as above, the main known results of \cite{GG15,GG14,RS1,RS2} establish that, if $d(x)$ is the distance to $\partial\Omega$:
\begin{itemize}
\item[(a)] If $\Omega$ is $C^{1,1}$, then
\[f\in L^\infty(\Omega)\quad \Longrightarrow\quad u/d^s\in C^{s-\varepsilon}(\overline\Omega)\qquad\quad \textrm{for all}\ \varepsilon>0,\]
\item[(b)] If $\Omega$ is $C^{2,\gamma}$ and $a\in C^{1,\gamma}(S^{n-1})$, then
\[\qquad f\in C^\gamma(\overline\Omega)\quad \Longrightarrow\quad u/d^s\in C^{\gamma+s}(\overline\Omega)\qquad \textrm{for}\ \gamma\in(0,s),\]
whenever $\gamma+s$ is not an integer.
\item[(c)] If $\Omega$ is $C^\infty$ and $a\in C^\infty(S^{n-1})$, then
\[\quad f\in C^\gamma(\overline\Omega)\quad \Longrightarrow\quad u/d^s\in C^{\gamma+s}(\overline\Omega)\qquad \textrm{for all}\ \gamma>0,\]
whenever $\alpha+s\notin\mathbb Z$.
In particular, $u/d^s\in C^\infty(\overline\Omega)$ whenever $f\in C^\infty(\overline\Omega)$.
\end{itemize}
Part (a) was proved in \cite{RS1} (for any $a\in L^1(S^{n-1})$); (b) was established in \cite{RS2} in the more general context of fully nonlinear equations; and (c) was established in \cite{GG15,GG14} for elliptic pseudodifferential operators satisfying the $s$-transmission property.
Furthermore, when $s+\gamma$ is an integer in (c), more information is given in \cite{GG14} in terms of H\"older-Zygmund spaces $C^k_*$.
We refer to the expository paper \cite{R1} and to \cite{GG14} for more details.

In case of \emph{parabolic} equations, the interior regularity theory has been developed in the last years, and is now well understood; see \cite{CD14,CK15,FR1,JX15,SS16,Se15}.
However, in such evolutionary setting, the boundary regularity is not so well understood.
The results of \cite{RS1} were extended to the parabolic setting by Fern\'andez-Real and the first author in \cite{FR1}, and more recently the case $f\in L^p$ was treated by Grubb in \cite{GG17}.
These results correspond to the case (a) above, and yield at best an expansion near the boundary of order $2s-\varepsilon$ in space and $1-\varepsilon$ in time.
No higher order boundary regularity result like those in (b) or (c) above is known in the parabolic setting.

It is important to notice that parabolic estimates for nonlocal equations offer new specific challenges over elliptic ones. 
Indeed, this already happens in the setting of interior regularity, where even solutions to the fractional heat equation $\partial_tu+(-\Delta)^su=0$ in $B_1$ are always $C^\infty$ in space but in general not $C^1$ in time.

In the context of boundary regularity, a related counterexample was recently found by G. Grubb \cite{GG18}: solutions to $\partial_tu+(-\Delta)^su=f$ in $(0,T)\times \Omega$, with $u=0$ in $(0,T)\times\Omega^c$, do not satisfy in general $u/d^s\in C^{s+\varepsilon}_x(\overline\Omega)$ for any $\varepsilon>0$, not even when $f$ and $\Omega$ are $C^\infty$. 
This is in sharp contrast with the elliptic case, in which solutions satisfy $u/d^s\in C^\infty(\overline\Omega)$.

The aim of this paper is to extend the boundary regularity result (b) above to the parabolic setting.
For this, we develop a parabolic version of the boundary blow-up methods developed by the first author and Serra in \cite{RS2}.
Our main result gives an expansion near the boundary of order $2s+\gamma$ in space and $1+\gamma/2s$ in time, for $\gamma\in(0,s)$, and reads as follows.

\begin{thm} \label{thm.main}
Let $s\in(0,1)$, and $\gamma\in(0,s)$ such that $\gamma+s$ is not an integer.
Let $\Omega$ be any $C^{2,\gamma}$ domain, and $u(t,x)$ be any solution of
\begin{equation}\label{eq.localest}
  \left\{ \begin{array}{rcll}
  \de_t u - L u & = & f & \textrm{in }(-1,0)\times(B_1\cap \Omega) \\
   u & = & 0 & \textrm{in } (-1,0)\times(B_1\setminus \Omega)
  \end{array}\right.
\end{equation}
with $L$ of the form \eqref{eq.L}.
Assume $a\in C^{1,\gamma}(S^{n-1})$ and $f\in C_{t,x}^{\frac{\gamma}{2s},\gamma}((-1,0)\times\overline{\Omega})$.
Let
\[C_0=\|u\|_{C_{t}^{\gamma/2s}((-1,0)\times\R^n)}+\|f\|_{C_{t,x}^{\gamma/2s,\gamma}((-1,0)\times\overline{\Omega})}.\]
Then,
\begin{equation}\label{eq.estimate}
\|\partial_tu\|_{C_{t,x}^{\gamma/2s,\gamma}\left(\left(-\frac{1}{2},0\right)\times\overline{\Omega}\cap B_{1/2}\right)} + \left\| u/d^s \right\|_{C_{t,x}^{1/2+\gamma/2s,s+\gamma}\left(\left(-\frac{1}{2},0\right)\times\overline{\Omega}\cap B_{1/2}\right)} \leq CC_0.
\end{equation}
The constant $C$ depends only on $\gamma$, $\|a\|_{C^{1,\gamma}(S^{n-1})}$, $n$, $s$, and the ellipticity constants.
\end{thm}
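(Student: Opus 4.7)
The plan is to prove Theorem \ref{thm.main} by a parabolic adaptation of the blow-up/contradiction method of \cite{RS2}, combined with a Liouville-type classification on the half-space. I would proceed in the following steps.

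First, I would flatten the boundary. Since $\Omega$ is $C^{2,\gamma}$, near any boundary point $x_0\in\de\Omega\cap B_{1/2}$ one can find a $C^{2,\gamma}$ diffeomorphism straightening $\de\Omega$ to $\{x_n=0\}$. After this change of variables the equation becomes $\de_t u - \widetilde{L}u = \widetilde{f}$, where $\widetilde{L}$ is a nonlocal operator with $(1,\gamma)$-variable kernel, and $d(x)$ is replaced by $(x_n)_+$ up to a $C^{2,\gamma}$ factor. The extra $(\widetilde{L}-L)u$ term can be absorbed into the right-hand side at the cost of the $C^{2,\gamma}$-norm of the change of variables, using the $C^s_x$ base regularity of $u$ inherited from \cite{FR1,RS1}.

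Next, I would set up the blow-up. Define, for a boundary point $z_0=(t_0,x_0)$ and radius $r>0$, the parabolic excess
\[
\Phi(z_0,r)=\inf_{p,\ q} r^{-(s+\gamma)}\bigl\|u - d^s\,p - q\bigr\|_{L^\infty(Q_r(z_0))},
\]
where $Q_r(z_0)=(t_0-r^{2s},t_0)\times B_r(x_0)$, $p$ runs over polynomials in $x$ of degree $\lfloor s+\gamma\rfloor$ with coefficients depending on $t$ with $C^{1/2+\gamma/2s}$ regularity, and $q$ is the exterior extension part; the parabolic scaling $r^{2s}\sim r$ is the one compatible with $L$. Suppose Theorem \ref{thm.main} fails. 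Then there exist sequences $u_k$, $f_k$, domains $\Omega_k$, and points $z_k$ and radii $r_k\to 0$ along which $\Phi(z_k,r_k)\to\infty$ while the hypotheses stay uniformly bounded. Choose $r_k$ as the first (largest) scale where the excess exceeds $k$, use a Campanato/Caccioppoli-type monotonicity argument to rule out faster growth at larger scales, and rescale
\[
v_k(t,x)=\frac{1}{r_k^{s+\gamma}\Phi(z_k,r_k)}\bigl(u_k - d^s p_k - q_k\bigr)(t_k+r_k^{2s}t,\,x_k+r_k x),
\]
subtracting the optimal $(p_k,q_k)$. By construction $\|v_k\|_{L^\infty(Q_1)}=1$, and the Campanato choice of $r_k$ yields the growth bound $\|v_k\|_{L^\infty(Q_R)}\leq C R^{s+\gamma}$ for $R\geq 1$. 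The rescaled right-hand side tends to zero, the rescaled domains converge to a half-space $\{x\cdot e>0\}$, and the rescaled kernels converge (after taking a subsequence using $a\in C^{1,\gamma}(S^{n-1})$) to a translation-invariant kernel $K_\infty$.

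The most delicate step is the compactness and passage to the limit, which requires uniform interior and boundary H\"older estimates for $v_k$ at every scale—these follow from iterating the results of \cite{FR1} (parabolic (a)-type estimates) together with the growth control. I would then invoke a Liouville-type theorem in the half-space: any function $v\in C((-\infty,0]\times\R^n)$ vanishing on $(-\infty,0]\times\{x\cdot e\leq 0\}$, solving $\de_t v - L_\infty v = 0$ in $(-\infty,0]\times\{x\cdot e>0\}$, and satisfying $|v(t,x)|\leq C(1+|x|+|t|^{1/2s})^{s+\gamma}$, must be of the form $v=(x\cdot e)_+^s P(t,x)$ with $P$ a polynomial of degree compatible with $s+\gamma<2s+\gamma<2s+1$; in particular $P$ is linear in $x$ and constant in $t$ in our range. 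This classification contradicts the normalization $\|v\|_{L^\infty(Q_1)}=1$ after the optimal polynomial $p_\infty$ has already been subtracted, completing the proof by contradiction.

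The main obstacle is the parabolic Liouville theorem just stated. Unlike the elliptic case, the time variable creates genuinely new solutions (for instance $t\cdot (x\cdot e)_+^s$) and one must identify exactly which monomials in $(t,x)$ are admissible given the growth $s+\gamma<2s$; the counterexample of Grubb \cite{GG18} shows this threshold is sharp, so the classification argument must delicately exploit the restriction $\gamma<s$ to exclude the bad solutions. I would prove it by first applying incremental quotients in $t$ and in tangential directions $x'$ (which preserve the equation and the vanishing on the exterior) to reduce the growth exponent, and then iterating until the growth is below $s$, at which point the elliptic-type Liouville theorem from \cite{RS2} applies slice-by-slice in $t$ and forces the structure $v=(x\cdot e)_+^s P(t,x)$; a final application of the equation $\de_t v - L_\infty v=0$ determines the admissible form of $P$.
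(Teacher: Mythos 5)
Your overall strategy—parabolic blow-up, contradiction, Liouville classification on the half-space—is indeed the paper's strategy. However, there are several genuine gaps and one significant misstep.

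The most serious problem is the boundary flattening. For a nonlocal operator of the form \eqref{eq.L}, a $C^{2,\gamma}$ diffeomorphism straightening $\de\Omega$ transforms $L$ into a non-translation-invariant operator whose kernel is comparable to (not a small perturbation of) the original one; the difference $(\widetilde{L}-L)u$ is therefore not a lower-order term that can be ``absorbed into the right-hand side,'' and the base regularity theory of \cite{FR1,RS1} you rely on does not apply to such a variable-kernel operator. Moreover, once you flatten you already have a half-space domain, so your later claim that ``the rescaled domains converge to a half-space'' is vacuous and signals that the two steps are not consistent. The paper avoids this entirely: it works directly with the curved domain, tracking $u/d^s$ (not $u$ itself) and letting the blow-up flatten the boundary in the limit. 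This is not a stylistic choice but a necessity in the nonlocal setting, and the key input that makes the curved-domain approach work is the control of $L(d^s)$ in $C^\gamma$ via Lemma~8.3 of \cite{RS2} (needing $a\in C^{1,\gamma}$ and $\Omega\in C^{2,\gamma}$), which your outline does not mention.

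Two further gaps. First, the paper measures the excess via the H\"older seminorm $[u/d^s - (p_r\cdot x + q_r)]_{C^{\beta/2s,\beta}}$ with a carefully constructed monotone quantity $\theta(r)$, which directly yields the dyadic bound $|p_{Rr}-p_r|\lesssim (Rr)^{\alpha+\beta-1}\theta(r)$ and hence the growth $[v_m]_{C^{\beta/2s,\beta}(Q_R)}\le CR^\alpha$ needed for compactness; your $L^\infty$-based excess plus an unspecified ``Campanato/Caccioppoli-type monotonicity argument'' leaves exactly this crucial growth estimate unproven. Second, and importantly, the blow-up argument alone does not give Theorem~\ref{thm.main}. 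What it yields (Proposition~\ref{prop.mainprop}) is an expansion of $u/d^s$ by affine functions at every boundary point. To convert this into the full $C_{t,x}^{1/2+\gamma/2s,\,s+\gamma}$ estimate for $u/d^s$ and the $C_{t,x}^{\gamma/2s,\gamma}$ estimate for $\de_t u$, the paper combines the boundary expansion with interior Schauder-type estimates at scale $r=\mathrm{dist}(x,\de\Omega)$ (Propositions~\ref{prop.interior} and~\ref{prop.estimate}, proved by a separate interior blow-up argument). Your proposal omits this entire second half of the proof.

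Your Liouville outline is in the right spirit (incremental quotients in $t$ and tangential $x'$), and the constraint $\gamma<s$ does indeed kill the time-dependent terms as you suggest. But note the structural asymmetry: the equation on incremental quotients holds for $h_n\ge 0$ only, and the paper's proof handles this via a two-step reduction—first show $w$ is constant in time and tangentially affine times $(x_n)_+^s$ (via Theorem~4.11 of \cite{FR1}), and then treat the $x_n$-profile $\psi$ with the one-dimensional Lemma~5.3 of \cite{RS2}. Your ``slice-by-slice in $t$'' phrasing risks obscuring that the equation is genuinely parabolic and that time cannot be frozen; the paper first establishes time-independence as a consequence of the half-space structure and only then argues elliptically.
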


Notice this is an estimate of order $2s+\gamma$ in space and $1+\gamma/2s$ in time.
This is precisely the order one expects for this equation.
In fact, for the local case (say when $L=\Delta$) one actually has $u\in C_x^{2+\gamma}$ and $u\in C_t^{1+\gamma/2}$.
Our estimate \eqref{eq.estimate} extends for the first time the results of \cite{RS2} to the parabolic setting  (in case of linear equations).
It is new even for the fractional Laplacian in $C^\infty$ domains.
Furthermore, in view of the above mentioned counterexample of Grubb, our Theorem is sharp (or almost-sharp) in the sense that it cannot hold for $\gamma>s$.

Notice also that the assumption that $u\in C_{t}^{\gamma/2s}((-1,0)\times\R^n)$ is necessary, even for the interior regularity.
Indeed, in \cite{FR1} the first author and Fern\'andez-Real showed that for any $\varepsilon>0$
one can construct a solution to the homogeneous fractional heat equation $\partial_tu+(-\Delta)^su=0$ in $(-1,0)\times B_1$ which is in $C_{t}^{\gamma/2s-\varepsilon}((-1,0)\times\R^n)$ and not
$C_t^{1+\gamma/2s}$ in $(-1,0)\times B_{1/2}$.

As for the regularity assumption on the function $a$ in \eqref{eq.L} and the domain $\Omega$, it is needed in the proof of Proposition \ref{prop.mainprop}. 
These are the same assumptions as in the elliptic case.
Indeed, since $L(d^s)$ does not vanish identically (something that does happen in the case of a flat boundary, where we have $L(x_n)_+^s=0$) we need to control over $[L(d^s)]_{C^\gamma}$. Such an estimate is obtained in terms of the $C^{1,\gamma}$ norm of $a$ and the $C^{2,\gamma}$ norm of $\Omega$; see Lemma 8.3 in \cite{RS2}.

As an immediate consequence of Theorem \ref{thm.main}, we get the following result for the Dirichlet problem.

\begin{cor}\label{cor.mainboundary}
Let $s\in(0,1)$, and $\gamma\in(0,s)$ such that $\gamma+s$ is not an integer.
Let $L$ be any operator of the form \eqref{eq.L} and $\Omega$ be any bounded $C^{2,\gamma}$ domain.
Suppose that $u(t,x)$ is the solution of
\begin{equation}\label{eq.fracheat_u0}
  \left\{ \begin{array}{rcll}
  \de_t u - L u&=&f& \textrm{in }\Omega,\ T > t > 0 \\
  u&=&0& \textrm{in } \R^n\setminus \Omega,\ T > t \geq 0, \\
  u(0,\cdot)&=&u_0& \textrm{in } \Omega,\ t = 0, \\
  \end{array}\right.
\end{equation}
Assume in addition that $a\in C^{1,\gamma}(S^{n-1})$, and denote
\[
C_0= \|u_0\|_{L^2(\Omega)} + \|f\|_{C_{t,x}^{\gamma/2s,\gamma}((0,T)\times\Omega)}.
\]
Then, we have
\begin{equation}
\|\partial_tu\|_{C^{\gamma/2s,\gamma}_{t,x}\left(\left(t_0,T\right)\times \overline{\Omega}\right)} + \left\| u/d^s \right\|_{C^{1/2+\gamma/2s,\gamma+s}_{t,x}\left(\left(t_0,T\right)\times\overline{\Omega}\right)} \leq CC_0,
\end{equation}
for any $0<t_0< T$ where $C$ depends only on $n$, $s$, $\Omega$, $t_0$, $T$, $\|a\|_{C^{1,\gamma}(S^{n-1})}$ and the ellipticity constants.
\end{cor}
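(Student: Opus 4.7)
The plan is to deduce Corollary \ref{cor.mainboundary} from Theorem \ref{thm.main} via translation, parabolic scaling, and a covering argument, after first upgrading the rough initial datum to a global-in-space Hölder-in-time bound, which is precisely what the constant $C_0$ in Theorem \ref{thm.main} requires.

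First I would establish a preliminary global estimate: for some $t_1\in(0,t_0)$, depending only on $t_0$,
\[
\|u\|_{C^{\gamma/2s}_t((t_1,T)\times\R^n)} \leq C\bigl(\|u_0\|_{L^2(\Omega)}+\|f\|_{L^\infty((0,T)\times\Omega)}\bigr),
\]
with $C$ depending only on $n$, $s$, $\Omega$, $t_0$, $T$, and the ellipticity constants. The norm on $\R^n$ is harmless because $u\equiv0$ outside $\Omega$, so only the values on $\overline\Omega$ matter. I would prove this by first invoking $L^2\to L^\infty$ smoothing for the Dirichlet-type nonlocal heat semigroup (which follows from uniform ellipticity of $L$ and the symmetry of the kernel) to pass from $L^2$ data at $t=0$ to $L^\infty$ control at time $t_1/2$, and then applying the low-order parabolic boundary-regularity estimates of \cite{FR1,GG17} to obtain $C^{1-\varepsilon}_t$ bounds up to the boundary for $t\ge t_1$; this is stronger than the required $C^{\gamma/2s}_t$ because $\gamma/2s<1/2$.

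Next I would localize and rescale. Fix $(t^*,x^*)\in[t_0,T]\times\overline\Omega$ and choose a radius $\rho>0$, depending only on the $C^{2,\gamma}$ character of $\Omega$ and on $t_0$, so that both $[t^*-\rho^{2s},t^*]\subset(t_1,T]$ and $B_\rho(x^*)\cap\Omega$ has the local structure of a $C^{2,\gamma}$ domain. Define
\[
\tilde u(\tau,y):= u\bigl(t^*+\rho^{2s}\tau,\,x^*+\rho y\bigr),
\]
which solves an equation of the form \eqref{eq.localest} on $(-1,0)\times(B_1\cap\tilde\Omega)$ with the same ellipticity constants and the same $a\in C^{1,\gamma}(S^{n-1})$, the right-hand side $\tilde f$ inheriting the appropriate Hölder bound from the scaling. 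Applying Theorem \ref{thm.main} to $\tilde u$ and then scaling back, while using the preliminary estimate to bound the $C^{\gamma/2s}_t(\R^n)$ piece of $C_0$, yields
\[
\|\partial_t u\|_{C^{\gamma/2s,\gamma}_{t,x}(E_\rho)}+\|u/d^s\|_{C^{1/2+\gamma/2s,s+\gamma}_{t,x}(E_\rho)}\leq C\,C_0,
\]
where $E_\rho:=(t^*-(\rho/2)^{2s},t^*)\times(B_{\rho/2}(x^*)\cap\overline\Omega)$. Interior points $x^*$ with $d(x^*)>\rho$ are handled in exactly the same way, replacing Theorem \ref{thm.main} by the interior parabolic estimates of \cite{CD14,JX15,FR1}.

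Finally, cover $[t_0,T]\times\overline\Omega$ by finitely many such parabolic boxes of uniform size and take the maximum of the constants to obtain the announced global bound. The main obstacle is the first step: one has to control the \emph{global} (in space, over all of $\R^n$) $C^{\gamma/2s}_t$ seminorm of $u$ starting from merely $L^2$ initial data, which forces a combination of semigroup smoothing with the low-order parabolic boundary theory of \cite{FR1,GG17}. Once that preliminary estimate is in hand, the reduction to Theorem \ref{thm.main} is a standard parabolic translation-scaling-covering argument.
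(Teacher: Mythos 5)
Your proposal follows essentially the same route as the paper: cover $\overline\Omega$ by finitely many parabolic boxes in which the localized estimate (Theorem \ref{thm.main} / Proposition \ref{prop.estimate}) applies, and reduce the $\|u\|_{C^{\gamma/2s}_t(\R^n)}$ term in $C_0$ to $\|u_0\|_{L^2}+\|f\|_{L^\infty}$ by combining $L^2\to L^\infty$ smoothing with the low-order parabolic boundary estimates of \cite{FR1}. The paper condenses this into an application of Lemma 6.1 of \cite{FR1} followed by the $C^{\gamma/2s}_t$-from-$L^\infty$ estimates, but the mechanism is the same as your two-stage smoothing argument.
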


\begin{rem}\label{rmk.seminorm}
Notice that $\gamma+s$ could be bigger than 1 (and is always less than 2).
This calls for some care when defining the H\"older seminorm being used, as well as the full norm.
When $\gamma+s<1$ there is no issue and the norm is defined as usual, i.e.
\[
 [w]_{C_{t,x}^{\frac{s+\gamma}{2s},s+\gamma}(I\times\Omega)}=\sup_{\substack{t,t'\in I \\ x,x'\in\Omega}}\frac{| w(t,x)- w(t',x')|}{|t-t'|^{\frac{s+\gamma}{2s}} + |x-x'|^{s+\gamma}}
 \]
and
\[
  \|w\|_{{C_{t,x}^{\frac{s+\gamma}{2s},s+\gamma}}(I\times\Omega)}=\|w\|_{L^\infty(I\times\Omega)}+[w]_{C_{t,x}^{\frac{s+\gamma}{2s},s+\gamma}(I\times\Omega)}.
\]

However, when $\gamma+s>1$ we need a higher order H\"older seminorm.

As in \cite{FR1}, we define in this case
\[
 [w]_{C_{t,x}^{\frac{s+\gamma}{2s},s+\gamma}(I\times\Omega)}=[w]_{C_t^{\frac{s+\gamma}{2s}}(I\times\Omega)}+[\nabla w]_{C_{t,x}^{\frac{\gamma+s-1}{2s},\gamma+s-1}(I\times\Omega)}
\]
where
\[
  [w]_{C_t^\alpha(I\times\Omega)}=\sup_{x\in\Omega}[w(\cdot,x)]_{C^\alpha(I)}
\]
and
\[
  \|w\|_{{C_{t,x}^{\frac{s+\gamma}{2s},s+\gamma}}(I\times\Omega)}=\|w\|_{L^\infty(I\times\Omega)}+\|\nabla w\|_{L^\infty(I\times\Omega)}+[w]_{C_{t,x}^{\frac{s+\gamma}{2s},s+\gamma}(I\times\Omega)}.
\]

Finally, in Proposition \ref{prop.interior} we are going to need the case $\alpha+\beta+s$ possibly bigger than 2. So in that case, again as in \cite{FR1}, we set

\[
[w]_{C^{\frac{\alpha+\beta+s}{2s},\alpha+\beta+s}(I\times \Omega)} = [\de_t w]_{C_{t,x}^{\frac{\alpha+\beta-s}{2s},\alpha+\beta-s}(I\times \Omega)} +[\nabla w]_{C_t^{\frac{\alpha+\beta-s}{2s},}(I\times \Omega)} +  [D^2_x w]_{C^{\frac{\alpha+\beta-3s}{2s},\alpha+\beta+s-2}_{t,x}(I\times \Omega)}
\]
and
\[
  \|w\|_{{C_{t,x}^{\frac{s+\gamma}{2s},s+\gamma}}(I\times\Omega)}=\|w\|_{L^\infty(I\times\Omega)}+\|\nabla w\|_{L^\infty(I\times\Omega)}+\|D^2 w\|_{L^\infty(I\times\Omega)}+[w]_{C_{t,x}^{\frac{s+\gamma}{2s},s+\gamma}(I\times\Omega)}.
\]

\end{rem}

To prove Theorem \ref{thm.main} we extend the ideas from \cite{RS2} to the parabolic setting.
Namely, we need to develop a parabolic version of the higher order boundary blow-up (elliptic) methods of \cite{RS2},
and then combine this with appropriate interior estimates in order to get the fine regularity estimate for $u$ up to the boundary.

\begin{rem}
Besides its own interest, sharp boundary regularity estimates find usually applications in free boundary problems; see for example \cite{DS16}.
We think that the ideas introduced in this paper could be used in order to establish the higher regularity of free boundaries in the obstacle problem for the fractional Laplacian,
at least in case $s>1/2$; see \cite{BFR2}.
\end{rem}

The paper is organized as follows.
We first prove a Liouville-type theorem in Section \ref{sec2}.
Then, in Section \ref{sec3} we prove the main proposition, namely Proposition \ref{prop.mainprop}.
Finally, in Section \ref{sec4} we prove Theorem~\ref{thm.main}, and in the Appendix we prove Proposition \ref{prop.interior}.

\section{A Liouville-type theorem in the half space}\label{sec2}

In this section we prove a Liouville type theorem in the half space for nonlocal parabolic equations. Let us define first the parabolic cylinders:
\[
 Q_r(t,x)=(t-r^{2s},t)\times B_r(x),
\]
where $B_r(x)$ is the ball of radius $r$ around $x$. Moreover, we will denote $Q_r(0,0)$ by $Q_r$ and $(-r^{2s},0)\times (B_r\cap\{x_n>0\})$ by $Q_r^+$.

\begin{thm}\label{thm.liou}
Let $s\in(0,1)$, $\gamma\in(0,s)$, $\beta\in(\gamma,1)$ and $\alpha=s+\gamma-\beta\in (0,s)$.
Let $w$ satisfy
\[
\left\{\begin{array}{rcl}
    (\partial_t-L)(w(\cdot+\tau,\cdot+h)-w(\cdot,\cdot)) & = & 0 \:\textrm{ in }\: (-\infty,0)\times \mathbb{R}^n_+ \\
                  w & = & 0 \:\textrm{ in }\: (-\infty,0)\times \mathbb{R}^n_-.
       \end{array}
\right.
\]
for $h\in \R^n,h_n\geq0$, $\tau<0$ where $L$ is an operator of the form \eqref{eq.L} and assume that $w$ satisfies the growth condition
\[
[w/(x_n)_+^s]_{C_{t,x}^{\beta/2s,\beta}(Q_R^+)}\leq CR^\alpha,\quad R\geq 1
\]
for some positive constant $C$. Then
\[
w(x)=(x_n)_+^s(p\cdot x+q)
\]
for some $p\in\R^n$ and $q\in\R$. If $\alpha+\beta<1$ then $p=0$.
\end{thm}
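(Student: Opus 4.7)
\emph{Plan.} I would adapt the elliptic half-space Liouville argument of \cite{RS2} to the parabolic setting by first reducing to the time-independent case and then invoking the elliptic result directly.

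\emph{Step 1 (auxiliary parabolic Liouville with growth $<s$).} The first thing I would establish is the parabolic analog of the elliptic half-space Liouville used in \cite{RS2}: if $v$ is $L$-caloric in $(-\infty,0)\times\R^n_+$, vanishes on $(-\infty,0)\times\R^n_-$, and $|v(t,x)|\leq CR^{\sigma}(x_n)_+^s$ on $Q_R^+$ for all $R\geq 1$ with $\sigma<s$, then $v\equiv c(x_n)_+^s$ for some $c\in\R$. I would prove this via a blow-down argument relying on oscillation decay of the quotient $v/(x_n)_+^s$ up to the flat boundary, which in turn follows from a parabolic boundary Harnack/boundary regularity for $L$-caloric equations (obtainable from the interior theory in \cite{FR1} combined with appropriate barriers).

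\emph{Step 2 (time independence).} Next I would apply the hypothesis with $h=0$: for each $\tau<0$, the function $v_\tau(t,x):=w(t+\tau,x)-w(t,x)$ is $L$-caloric in $(-\infty,0)\times\R^n_+$ and vanishes on $(-\infty,0)\times\R^n_-$. The growth hypothesis on the time-Hölder seminorm of $w/(x_n)_+^s$ gives $|v_\tau|/(x_n)_+^s\leq CR^\alpha|\tau|^{\beta/2s}$ in $Q_R^+$, so Step 1 (with $\sigma=\alpha<s$) yields $v_\tau=c(\tau)(x_n)_+^s$. Additivity $c(\tau_1+\tau_2)=c(\tau_1)+c(\tau_2)$ then forces $c(\tau)=A\tau$. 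Choosing $R\simeq|\tau|^{1/2s}$ in the growth bound gives $|A\tau|\leq C|\tau|^{(s+\gamma)/2s}$, and since $(s+\gamma)/2s<1$ (because $\gamma<s$) the limit $|\tau|\to\infty$ forces $A=0$. Hence $w$ is independent of~$t$.

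\emph{Step 3 (reduction to the elliptic Liouville).} With $w=w(x)$, the hypothesis reduces to $L(w(\cdot+h)-w)=0$ on $\R^n_+$ for every $h$ with $h_n\geq 0$. Since $L$ commutes with translations, this reads $Lw(x+h)=Lw(x)$ on $\R^n_+$, and varying $h$ over the admissible cone shows that $Lw\equiv c_0$ is constant on $\R^n_+$. The spatial growth hypothesis becomes $|w|/(x_n)_+^s\leq CR^{s+\gamma}$ on $B_R^+$. After subtracting a particular solution (a multiple of $(x_n)_+^{2s}$) to account for the constant source $c_0$, the elliptic half-space Liouville theorem of \cite{RS2} applies and yields that $w(x)/(x_n)_+^s$ is a polynomial of degree $\leq 1$, i.e. $w(x)=(x_n)_+^s(p\cdot x+q)$. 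If additionally $\alpha+\beta=s+\gamma<1$, the growth is subaffine and forces $p=0$.

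\emph{Main obstacle.} The genuinely new ingredient relative to \cite{RS2} is Step 1: the parabolic half-space Liouville for $L$-caloric functions with growth rate below $s$. This needs a parabolic boundary Harnack (or equivalent boundary oscillation decay) for nonlocal equations; once that tool is in place, the rest of the argument is a fairly standard chain of additivity plus sharp scale-by-scale bookkeeping.
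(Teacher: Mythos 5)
Your overall plan --- kill the $t$-dependence and then invoke the elliptic half-space machinery of \cite{RS2} --- matches the paper's in spirit, but the execution differs at each step, and Step~3 as written has a genuine gap.

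\emph{On Step 1.} You describe the parabolic half-space Liouville with growth below $s$ as the ``main obstacle,'' to be established via a parabolic boundary Harnack. This is precisely Theorem~4.11 of \cite{FR1}, which the paper simply cites and applies to the difference $v=w(\cdot+\tau,\cdot+h)-w$. There is nothing new to prove here.

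\emph{On Step 2, and what the restriction $h=0$ loses.} Your additivity-plus-scaling argument that the coefficient $c(\tau)$ must be linear and then zero is correct, and it fills in a detail the paper passes over quickly. But by restricting to $h=0$ you only extract time-independence. The paper applies the Liouville of \cite{FR1} to $v=w(\cdot+\tau,\cdot+h)-w$ for \emph{arbitrary} $h$ with $h_n=0$, and the conclusion $v=K(\tau,h)(x_n)_+^s$ then yields, in one stroke, both that $w$ is $t$-independent (your argument, applied to $K(\tau,0)$) and that, tangentially, $w(x'+h',x_n)-w(x',x_n)=K(h')(x_n)_+^s$ with $K$ linear. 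The latter gives the decomposition $w(x)=(x_n)_+^s(a\cdot x')+\psi(x_n)$, which is the key structural input to the elliptic part of the argument, and which your version of Steps~1--2 does not produce.

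\emph{On Step 3 --- the gap.} You argue that $Lw\equiv c_0$ is constant and propose to subtract a multiple of $(x_n)_+^{2s}$ as a particular solution. But $Lw$ is not classically defined: the growth hypothesis only gives $|w|\lesssim (x_n)_+^s(1+|x|)^{s+\gamma}$, i.e.\ $|w|\sim|x|^{2s+\gamma}$ along the normal direction, so the integral $\int_{|y|>1}|w(x+y)|\,|y|^{-n-2s}\,dy$ diverges. The same is true of $L\bigl((x_n)_+^{2s}\bigr)$, whose tail is borderline divergent, so $(x_n)_+^{2s}$ cannot serve as a classical particular solution either. The entire point of the difference formulation $L(w(\cdot+h)-w)=0$, both in \cite{RS2} and in this paper, is to avoid evaluating $L$ on such rapidly growing functions, and the argument must stay inside that framework. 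The paper's route is instead: feed the decomposition $w=(x_n)_+^s(a\cdot x')+\psi(x_n)$ from the previous step into Lemma~5.5 of \cite{RS2}, which shows the first summand already satisfies the difference equation, and then apply the one-dimensional Liouville Lemma~5.3 of \cite{RS2} to $\psi$, obtaining $\psi=C_1(x_n)_+^s+C_2(x_n)_+^{1+s}$ and hence $w=(x_n)_+^s(p\cdot x+q)$. If you want to avoid the decomposition, the alternative is to cite an elliptic half-space Liouville stated directly in the difference formulation; the detour through ``$Lw=\mathrm{const}$'' plus a particular solution $(x_n)_+^{2s}$ does not go through as written.
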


\begin{proof}
First let $h_n=0$ and $v(t,x)=w(t+\tau,x+h)-w(t,x)$. Then $v$ satisfies
\[
\left\{\begin{array}{rcl}
    \partial_tv-Lv & = & 0 \:\textrm{ in }\: (-\infty,0)\times \mathbb{R}^n_+ \\
             v & = & 0 \:\textrm{ in }\: (-\infty,0)\times \mathbb{R}^n_-.
       \end{array}
\right.
\]

Moreover,
\[
\|v(t,x)/(x_n)_+^s\|_{L^\infty(Q_R)}\leq CR^\alpha,
\]
so
\[
\|v\|_{L^\infty(Q_R)}\leq CR^{\alpha+s}
\]
for $ R\geq 1$. Then we can apply Theorem 4.11 in \cite{FR1} (note that $\alpha+s<2s$) to get that
\[
v(t,x)=K(x_n)_+^s
\]
for some constant $K$. In particular, this implies that $w$ is constant as a function of time and
\[
w(t,x+h)-w(t,x)=K(x_n)_+^s
\]
for any $h$ such that $h_n=0$. Therefore,
\[
w(t,x)=K(x_n)_+^s(a\cdot x+b)+\psi(x_n)
\]
for some one dimensional function $\psi$.

Now, by Lemma 5.5 in \cite{RS2} and the fact that $w$ is constant in time and solves the equation we have
\[
L(w(t,\cdot+h)-w(t,\cdot))=L(\psi(\cdot +h_n)-\psi(\cdot))=0\qquad \textrm{in}\quad \mathbb{R}_+^n
\]
for $h_n\geq 0$ so $\psi$ satisfies:
\[
\left\{\begin{array}{rcl}
		 L(\psi(\cdot +h_n)-\psi(\cdot))& = & 0 \:\textrm{ in }\: \mathbb{R}_+ \\
                  \psi & = & 0 \:\textrm{ in }\: \mathbb{R}_-.
       \end{array}
\right.
\]

Moreover, because of the hypothesis on $w$, $\psi$ has the growth control
\[
[\psi/(x_n)_+^s]_{C^\beta([0,R])}\leq CR^\alpha,\quad R\geq 1.
\]
so we can apply Lemma 5.3 in \cite{RS2} to get
\[
\psi(x_n)=C_1(x_n)_+^s+C_2(x_n)_+^{1+s},
\]
and the result follows.
\end{proof}

\section{Main proposition}\label{sec3}

In this section we take the main step towards the proof of Theorem \ref{thm.main}, namely we prove Proposition \ref{prop.mainprop}. We start with th following technical lemma that allows
us to take limits.

\begin{lem}\label{lem.limwm}
Let $s\in(0,1)$ and $\{L_m\}_{m\in\N}$ be a sequence of operators of the form  \eqref{eq.L}.
Let $\{w_m\}_{m\in\N}$ and $\{f_m\}_{m\in\N}$ be sequences of functions satisfying
\[
(\de_t-L)(w_m(\cdot+h,\cdot+\tau)-w_m(\cdot,\cdot)) = f_m \textrm{ in } I\times K
\]
in the weak sense for a given bounded interval $I\subset (-\infty,0]$ and a bounded domain $K \subset\R^n$.
Assume that $L_m$ are given by \eqref{eq.L} for some $a_m:S^{n-1}\mapsto[\lambda,\Lambda]$ that
converge uniformly to some $a$. Let $L$ be the operator associated to such $a$, and suppose that, for some functions $w$ and $f$, the following hypotheses hold:
\begin{enumerate}
\item $w_m$ converges to $w$ uniformly in compact sets of $(-\infty,0]\times\R^n$,
\item $f_m \to f$ uniformly on $I\times K$,
\item $\sup_{t\in I}|w_m(t+\tau,x+h)-w_m(t,x)| \leq C\left(1+|x|^{2s-\varepsilon}\right)$ for some $\varepsilon > 0$, and for all $x\in \R^n$.
\end{enumerate}
Then, $w$ satisfies
\[
(\de_t-L)(w(\cdot+h,\cdot+\tau)-w(\cdot,\cdot)) = f \textrm{ in } I\times K
\]
in the weak sense.
\end{lem}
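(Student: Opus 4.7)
The plan is a standard stability argument for weak solutions of nonlocal parabolic equations: test the equation satisfied by $v_m := w_m(\cdot+\tau,\cdot+h)-w_m(\cdot,\cdot)$ against a smooth compactly supported test function $\varphi\in C_c^\infty(I\times K)$, and let $m\to\infty$ using hypotheses (1)--(3). Setting $v := w(\cdot+\tau,\cdot+h)-w(\cdot,\cdot)$, the goal is to verify that for every such $\varphi$,
\[
\int_I\int_{\R^n} v\bigl(-\partial_t\varphi-L\varphi\bigr)\,dx\,dt \;=\; \int_I\int_{K} f\,\varphi\,dx\,dt.
\]

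Since the kernels are symmetric, each $L_m$ is formally self-adjoint, so the weak formulation of the equation for $v_m$ reads
\[
\int_I\int_{\R^n} v_m\bigl(-\partial_t\varphi-L_m\varphi\bigr)\,dx\,dt \;=\; \int_I\int_{K} f_m\,\varphi\,dx\,dt.
\]
The right-hand side converges to $\int\!\int f\varphi$ by hypothesis (2) and the boundedness of $I\times K$. The term $\int\!\int v_m\,\partial_t\varphi$ has integrand supported in a fixed compact subset of $I\times\R^n$ (enlarged slightly to contain the $h$-shift of $K$), on which $v_m\to v$ uniformly by hypothesis (1), so the limit is $\int\!\int v\,\partial_t\varphi$.

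The main step is the operator term $\int\!\int v_m L_m\varphi$. I would fix $R$ large enough that $\mathrm{supp}_x\varphi\subset B_{R/2}$ and split the $x$-integral as $B_R\cup B_R^c$. On the compact region $B_R$, $v_m\to v$ uniformly by (1); and $L_m\varphi\to L\varphi$ uniformly on $B_R$ as well, since the integrand defining $L_m\varphi(x)$ is pointwise dominated by $C\,\|\varphi\|_{C^2(\R^n)}\min(|y|^{2-n-2s},|y|^{-n-2s})\in L^1(\R^n)$ independently of $x\in B_R$, while $a_m\to a$ uniformly on $S^{n-1}$ allows dominated convergence. Hence this piece converges to $\int\!\int_{B_R} v\,L\varphi$.

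The only step that genuinely uses the growth hypothesis (3), and the main obstacle in the argument, is the tail contribution from $B_R^c$. Since $\varphi\equiv 0$ there, the kernel bound $K_m(y)\le \Lambda|y|^{-n-2s}$ together with $|x-z|\ge |x|/2$ for $|x|\ge R$ and $z\in\mathrm{supp}\,\varphi$ yields $|L_m\varphi(x)|\le C\|\varphi\|_{L^\infty}|x|^{-n-2s}$ uniformly in $m$. Combined with (3), this produces the pointwise bound $|v_m(t,x)\,L_m\varphi(t,x)|\le C(1+|x|^{2s-\varepsilon})|x|^{-n-2s}\in L^1_x(B_R^c)$, with norm $O(R^{-\varepsilon})$ uniformly in $m\in\N$ and $t\in I$. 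Sending $m\to\infty$ first and then $R\to\infty$ makes the $B_R^c$ contribution vanish in both identities, and the weak formulation for $v$ with operator $L$ and source $f$ follows.
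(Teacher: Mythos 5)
Your proof is correct and follows essentially the same route as the paper's: test the equation for $v_m=w_m(\cdot+\tau,\cdot+h)-w_m$ against $\eta\in C^\infty_c(I\times K)$, use the uniform convergence $a_m\to a$ to get $L_m\eta\to L\eta$, and combine the decay $|L_m\eta(x)|\lesssim (1+|x|)^{-(n+2s)}$ with the growth hypothesis (3) to produce an integrable dominant and pass to the limit. Your explicit $B_R/B_R^c$ split is merely a more detailed presentation of the dominated-convergence step that the paper invokes directly.
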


\begin{proof}
Using the definition of weak solution we get, for each $m$ and $\eta\in C^\infty_c(I\times K)$:
\[
\int_{I\times K}(w_m(\cdot+h,\cdot+\tau)-w_m(\cdot,\cdot))(-\de_t-L_m)\eta\:dtdx = \int_{I\times K}f_m\eta\:dtdx.
\]
Because of the uniform convergence $a_m$ to $a$ we have that $(-\de_t-L_m)\eta$ converges to $(-\de_t-L)\eta$ thanks to the dominated convergence theorem.
Also, since $\eta$ is compactly supported we easily find $| L_m\eta(x)|\leq C(1+| x|^{n+2s})^{-1}$, so we just need to prove a growth condition on the difference
quotients that allows us to pass to the limit, i.e. we want
\[
\bigl|(w_m(\cdot+h,\cdot+\tau)-w_m(\cdot,\cdot))(-\de_t-L_m)\eta\bigr|\leq \frac{C}{1+|x|^{n+\varepsilon}}
\]
for some positive $\varepsilon$.
Because of the previous observation, this ammounts to show that
\[
\bigl|(w_m(\cdot+h,\cdot+\tau)-w_m(\cdot,\cdot))\bigr|\leq C(1+|x|^{2s-\varepsilon})
\]
but since that is exactly the third assumption, the result follows.
\end{proof}

Next we are going to prove Proposition \ref{prop.mainprop}.
First, let us give the following definition:

\begin{defi}
 We say that $\Gamma$ is a $C^{2,\gamma}$ surface splitting $\R^n$ into $\Omega^+$ and $\Omega^-$ with norm smaller than 1 if the following holds:
 \begin{itemize}
  \item $\Gamma\subset\R^n$ is the graph of a $C^{2,\gamma}$ function with $C^{2,\gamma}$ norm less than 1
  \item the two disjoint domains $\Omega^+$ and $\Omega^-$ partition $\R^n$, i.e. $\R^n=\overline{\Omega^+}\cup\overline{\Omega^-}$ and $\Omega^+\cap\Omega^-=\emptyset$
  \item $\Gamma=\partial\Omega^+=\partial\Omega^-$ and $0\in\Gamma$
  \item $\nu(0)=e_n$ where $\nu$ is the normal vector to $\Gamma$.
 \end{itemize}
Moreover, we will denote by $d(x)$ any $C^{2,\gamma}(\overline{\Omega^+})$ function that coincides with dist$(x,\Omega^-)$ in a neighborhood of $\Gamma\cap B_4$ and vanishes outside $B_5$.
\end{defi}

Then we have the following result:

\begin{prop}
\label{prop.mainprop}
Let $s\in(0,1)$, $\gamma\in(0,s)$, $\beta\in(\gamma,1)$ and $\alpha=s+\gamma-\beta\in(0,s)$.
Let $\Gamma$ be any $C^{2,\gamma}$ surface splitting $\R^n$ into $\Omega^+$ and $\Omega^-$ with norm smaller than 1.
Let $u$ be any solution of
\begin{equation}\label{eq.eqmainprop}
  \left\{ \begin{array}{rcll}
  \de_t u - L u & = & f & \textrm{ in }(-1,0)\times (B_1\cap\Omega^+) \\
   u & = & 0 & \textrm{in } (-1,0)\times \Omega^-
  \end{array}\right.
\end{equation}
with $L$ an operator of the form \eqref{eq.L}. 
Assume that
\begin{equation}\label{eq.betanorm}
[u/d^s]_{C_{t,x}^{\beta/2s,\beta}((-1,0)\times\Omega^+)}\leq 1 \quad\textrm{ and }\quad \|f\|_{C_{t,x}^{\gamma/2s,\gamma}((-1,0)\times (B_1\cap\Omega^+))}\leq 1
\end{equation}
and $a\in C^{1,\gamma}(S^{n-1})$.

Then for any $r>0$
\begin{equation}\label{eq.pr}
r^{-\alpha}\bigl[u/d^s-(p_r\cdot x+q_r)\bigr]_{C_{t,x}^{\beta/2s,\beta}((-r^{2s},0)\times (B_r\cap\Omega^+))}\leq C
\end{equation}
where $C$ is a constant depending only on $n$, $s$, $\alpha$, $\beta$, $\|a\|_{C^{1,\gamma}(S^{n-1})}$, and the ellipticity constants and $p_r\cdot x+q_r$ is the least square approximation
of $u/d^s$ in $(-r^{2s},0)\times (B_r\cap\Omega^+)$, i.e.
\begin{equation}\label{eq.min}
p_r\cdot x+q_r=\arg\min_{\mathcal{P}} \int_{-r^{2s}}^0\int_{B_r\cap\Omega^+}\bigl(u/d^s-(p\cdot x+q)\bigr)^2\:dx\:dt
\end{equation}
where $\mathcal{P}$ is the space of all polynomials of degree at most $\lfloor \alpha+\beta\rfloor$ (i.e., $p_r=0$ if $\alpha+\beta<1$).
\end{prop}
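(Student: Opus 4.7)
The plan is to argue by contradiction via a blow-up and classification scheme, in the spirit of the elliptic argument in \cite{RS2}. If \eqref{eq.pr} fails, a monotonicity-in-scale device produces a normalized blow-up sequence whose limit, via Lemma \ref{lem.limwm}, satisfies the hypotheses of the Liouville Theorem \ref{thm.liou}; this identifies the limit as a polynomial in $\mathcal{P}$, which the least-squares orthogonality \eqref{eq.min} then forces to vanish, contradicting the normalization.

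Concretely, suppose the estimate fails along sequences $(\Gamma_m,\Omega_m^\pm,L_m,u_m,f_m)$ satisfying all the hypotheses, and set
$$\theta_m(r) := \sup_{r'\geq r}(r')^{-\alpha}\bigl[u_m/d_m^s - P_{r',m}\bigr]_{C^{\beta/2s,\beta}_{t,x}((-r'^{2s},0)\times(B_{r'}\cap\Omega_m^+))},$$
with $P_{r,m}(x)=p_{r,m}\cdot x + q_{r,m}$ the least-squares approximant from \eqref{eq.min}. A standard monotonicity argument extracts radii $r_m\to 0$ with $\theta_m(r_m)\to\infty$ at which the supremum defining $\theta_m$ is essentially attained, so that the rescalings
$$v_m(t,x):=\frac{(u_m/d_m^s)(r_m^{2s}t,r_m x)-P_{r_m,m}(r_m x)}{r_m^{\alpha+\beta}\,\theta_m(r_m)}$$
enjoy three simultaneous properties: the normalization $[v_m]_{C^{\beta/2s,\beta}(Q_1^+)}\geq 1/2$; the polynomial growth $[v_m]_{C^{\beta/2s,\beta}(Q_R^+)}\leq CR^\alpha$ for $R\geq 1$; and the $L^2$-orthogonality of $v_m$ to $\mathcal{P}$ on $Q_1^+$, where $+$ now refers to the rescaled domain $r_m^{-1}\Omega_m^+$.

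The $C^{2,\gamma}$ regularity of $\Gamma_m$ together with the normalization $\nu(0)=e_n$ forces $r_m^{-1}\Gamma_m\to\{x_n=0\}$ and $d_m(r_m\,\cdot)/r_m\to (x_n)_+$ locally in $C^{2,\gamma}$, while Arzelà-Ascoli on $S^{n-1}$ (using the uniform $C^{1,\gamma}$ bound on $a_m$) produces along a subsequence $a_m\to a_\infty$ uniformly, with limit operator $L_\infty$ of the form \eqref{eq.L}. A diagonal Arzelà-Ascoli argument then yields $v_m\to v_\infty$ locally uniformly on $(-\infty,0]\times\R^n_+$, and the lifting $W_\infty(t,x):=(x_n)_+^s v_\infty(t,x)$ inherits the growth required by Theorem \ref{thm.liou}. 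To obtain the equation, I would work with $W_m(t,x):=(u_m-d_m^s P_{r_m,m})(r_m^{2s}t,r_m x)/(r_m^{s+\alpha+\beta}\theta_m(r_m))$, which solves $\partial_tW_m-L_mW_m=F_m$, where $F_m$ assembles the rescaled forcing $f_m$ and the term $L_m(d_m^s P_{r_m,m})$. What Theorem \ref{thm.liou} really needs is the equation on \emph{differences}: the $C^{\gamma/2s,\gamma}_{t,x}$ hypothesis on $f_m$ and an elliptic bound on $[L(d^s\,\text{pol})]_{C^\gamma}$ (the analogue of Lemma 8.3 in \cite{RS2}) force $F_m(\cdot+\tau,\cdot+h)-F_m(\cdot,\cdot)$ to vanish locally uniformly for each fixed $\tau<0$, $h_n\geq 0$. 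Lemma \ref{lem.limwm} then passes the identity $(\partial_t-L_m)(W_m(\cdot+\tau,\cdot+h)-W_m)=F_m(\cdot+\tau,\cdot+h)-F_m$ to the limit, yielding the hypothesis of Theorem \ref{thm.liou} for $W_\infty$. That theorem identifies $v_\infty(t,x)=p\cdot x+q\in\mathcal{P}$, and the $L^2$-orthogonality of $v_m$ to $\mathcal{P}$ passes to the limit to force $v_\infty\equiv 0$, contradicting $[v_m]_{C^{\beta/2s,\beta}(Q_1^+)}\geq 1/2$.

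The main technical obstacle is the parabolic analogue of the hardest computation in \cite{RS2}: verifying that after the parabolic rescaling the Hölder differences of $L_m(d_m^s P_{r_m,m})$ die together with those of $f_m$ in the limit. This is precisely where the assumptions $a\in C^{1,\gamma}(S^{n-1})$ and $\Omega\in C^{2,\gamma}$ are used, and requires being careful about which $\gamma$-Hölder estimate on the second difference of $L(d^s\cdot\mathrm{pol})$ is available after rescaling by $r_m^\gamma/(r_m^{\alpha+\beta}\theta_m(r_m))$. A secondary technical nuisance is the monotonicity construction of the sequence $r_m$, which must simultaneously deliver the unit-scale lower bound and the large-scale growth bound on $v_m$; the two cases $\alpha+\beta<1$ and $\alpha+\beta\geq 1$ of $\mathcal{P}$ (constants versus affine functions) need only cosmetic adjustments, matching the case-by-case statement of Theorem \ref{thm.liou}.
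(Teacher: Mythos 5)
Your proposal is correct and follows essentially the same compactness/blow-up strategy as the paper: contradiction, extraction of near-maximizing scales via a monotone $\theta(r)$, verification of normalization/growth/orthogonality for $v_m$, lifting to $w_m = \bar d_m^s v_m$, passage to the limit with Lemma \ref{lem.limwm} (using the bound on $L(d^s(p\cdot x+q))$ from the $C^{1,\gamma}$ regularity of $a$ and the $C^{2,\gamma}$ regularity of $\Gamma$ together with the estimate $|p_m|+|q_m| = o(\theta(r'_m))$), application of the Liouville Theorem \ref{thm.liou}, and finally contradiction with the orthogonality. The only cosmetic difference is that you introduce per-index quantities $\theta_m(r)$ rather than the paper's single $\theta(r)$ (which already includes the supremum over the sequence index), but the selection of $r_m$ and the ratio bound $\theta(R r_m)/\theta(r_m)\le 1$ needed for the growth estimate go through in either formulation.
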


\begin{proof}
The proof is by contradiction. Assume \eqref{eq.pr} does not hold. Then there are sequences $\Gamma_k$, $u_k$, $L_k$ and $f_k$ satisfying:
\begin{itemize}
 \item $\Gamma_k$ is a $C^{2,\gamma}$ surface splitting $\R^n$ into $\Omega_k^+$ and $\Omega_k^-$ with norm less than 1
 \item $L_k$ is of the form \eqref{eq.L} and $a_k\in C^{1,\gamma}(S^{n-1})$
 \item $[u_k/d_k^s]_{C_{t,x}^{\beta/2s,\beta}((-1,0)\times\Omega_k^+)}\leq 1$ and $\|f_k\|_{C_{t,x}^{\gamma/2s,\gamma}((-1,0)\times (B_1\cap\Omega_k^+))}\leq 1$
 \item $u_k$ is a solution of
 \begin{equation}\label{eq.eqk}
  \left\{ \begin{array}{rcll}
  \de_t u_k - L_k u_k & = & f_k & \textrm{in }(-1,0)\times (B_1\cap\Omega_k^+) \\
   u_k & = & 0 & \textrm{in } (-1,0)\times \Omega_k^-.
  \end{array}\right.
\end{equation}
\end{itemize}
such that
\[
\sup_k\:\sup_{r>0}\:r^{-\alpha}\bigl[u_k/d_k^s-(p_{r,k}\cdot x+q_{r,k})\bigr]_{C_{t,x}^{\beta/2s,\beta}((-r^{2s},0)\times (B_r\cap\Omega_k^+))}=\infty
\]
where
\begin{equation}\label{eq.mincond}
p_{r,k}\cdot x+q_{r,k}=\arg\min_{\mathcal{P}} \int_{-r^{2s}}^0\int_{B_r\cap\Omega_k^+}(u_k/d_k^s-(p\cdot x+q))^2\:dx\:dt.
\end{equation}

Define
\[
\theta(r):=\sup_k\:\sup_{r'>r}\:(r')^{-\alpha}\bigl[u_k/d_k^s-(p_{r',k}\cdot x+q_{r',k})\bigr]_{C_{t,x}^{\beta/2s,\beta}((-(r')^{2s},0)\times (B_{r'}\cap\Omega_k^+))}.
\]
Notice that $\theta$ is a nondecreasing function of $r$ that goes to $\infty$ as $r\to0$.
Moreover, since for any fixed $r$ we have $\theta(r)<\infty$ we can find a sequence $r'_m\geq 1/m$, $r'_m\searrow 0$ and $k_m$ such that
\[
(r_m')^{-\alpha}\bigl[u_{k_m}/d_{k_m}^s-p_{r_m',k_m}\cdot x-q_{r_m',k_m}\bigr]_{C_{t,x}^{\beta/2s,\beta}((-(r'_m)^{2s},0)\times (B_{r'_m}\cap\Omega_{k_m}^+))}\geq \frac{1}{2}\theta(1/m)\geq\frac{1}{2}\theta(r'_m).
\]

Let us denote
\[
u_m=u_{k_m}, \quad p_m=p_{r_m',k_m}, \quad\textrm{ and }\quad q_m=q_{r_m',k_m}
\]
and
\[
\overline{\Gamma}_m=\frac{1}{r'_m}\Gamma_{k_m}, \quad \overline{\Omega}^+_m=\frac{1}{r'_m}\Omega^+_{k_m}, \quad\textrm{ and }\quad \bar{d}_m(x)=\frac{1}{r'_m}d_{k_m}(r'_mx).
\]
Notice that $\overline{\Gamma}_m$ converges to $\{x_n=0\}$, $\overline{\Omega}^+_m$ converges to $\R^n_+$ and $\bar{d}_m^s$ converges locally uniformly to $(x_n)_+^s$ as $m\rightarrow\infty$. Further, let us simplify the notation by defining
\[
Q_{r,k}^+=(-r^{2s},0)\times (B_r\cap\Omega_k^+)\quad\textrm{ and }\quad\overline{Q}_{r,m}^+=(-r^{2s},0)\times (B_r\cap\overline{\Omega}_m^+).
\]

We define the blow-up sequence:
\[
v_m(t,x)=\frac{u_m((r'_m)^{2s}t,r'_mx)/(r'_m\bar{d}_m(x))^s-(p_m\cdot (r'_mx)+q_m)}{(r'_m)^{\alpha+\beta}\theta(r'_m)}.
\]
Notice that
\begin{equation}\label{eq.boundvbel}
[v_m]_{C_{t,x}^{\beta/2s,\beta}(\overline{Q}_{1,m}^+)}\geq 1/2
\end{equation}
and that
\begin{equation}\label{eq.ort}
\int_{-1}^0\int_{B_1\cap\overline{\Omega}_m^+}v_m(p\cdot x+q)\:dx\:dt=0
\end{equation}
for all $p\in\mathbb{R}^n$ and $q\in\mathbb{R}$ because of the minimization condition \eqref{eq.mincond}.

Let us show next that
\begin{equation}\label{eq.boundv}
[v_m]_{C^{\beta/2s,\beta}(\overline{Q}_{R,m}^+)}\leq CR^\alpha
\end{equation}
for $1\leq R\leq \frac{1}{r_m'}$.
For this need an estimate of the form
\begin{equation}\label{eq.linear}
\bigl[p_{k_m,Rr'_m}\cdot x-q_{k_m,Rr'_m}-(p_{k_m,r'_m}\cdot x+q_{k_m,r'_m})\bigr]_{C_{t,x}^{\beta/2s,\beta}(Q_{r'_mR,k_m}^+)}\leq (Rr'_m)^\alpha\theta(r'_m)
\end{equation}
which actually amounts to bound $\vert p_{k,Rr}-p_{k,r}\vert$.
This will be achieved via a dyadic estimate, i.e. for $R=2^k$, $k\geq1$ (notice that the estimate holds if $R\in[1,1/r'_m)$ by the definition of $v_m$).
We will also prove here only the case where $\alpha+\beta>1$, the case $\alpha+\beta<1$ is analogous.
Let us first estimate the difference when $R=2$:
\begin{eqnarray*}
\frac{\vert p_{k,2r}-p_{k,r}\vert r^{1-\beta}}{r^\alpha\theta(r)} & \leq & \frac{[(p_{k,2r}-p_{k,r})\cdot x]_{C_{t,x}^{\beta/2s,\beta}(Q_{r,k}^+)}}{r^\alpha\theta(r)} \\
                                                                  & = & \frac{[p_{k,2r}\cdot x+q_{k,2r}-p_{k,r}\cdot x-q_{k,r}]_{C_{t,x}^{\beta/2s,\beta}(Q_{r,k}^+)}}{r^\alpha\theta(r)} \\
                                                                  & \leq & \frac{2^\alpha\theta(2r)}{\theta(r)}\frac{[u_k/d_k^s-p_{k,2r}\cdot x-q_{k,2r}]_{C_{t,x}^{\beta/2s,\beta}(Q_{2r,k}^+)}}{(2r)^\alpha\theta(2r)} \\
								 &  & +\frac{[u_k/d_k^s-p_{k,r}\cdot x-q_{k,r}]_{C_{t,x}^{\beta/2s,\beta}(Q_{r,k}^+)}}{r^\alpha\theta(r)} \\
                                                                  & \leq & C,
\end{eqnarray*}
where we have used the definition of $\theta$ and its monotonicity.
Now if $R=2^k$ we have
\begin{eqnarray*}
\vert p_{k,2^kr}-p_{k,r}\vert & \leq & \sum_{j=1}^k \vert p_{k,2^jr}-p_{k,2^{j-1}r}\vert \\
				   & \leq & C\sum_{j=1}^k (2^{j-1}r)^{\alpha+\beta-1}\theta(2^{j-1}r) \\
				   & \leq & C\theta(r)(2^kr)^{\alpha+\beta-1},
\end{eqnarray*}
using that $\alpha+\beta-1>0$ and this readily implies \eqref{eq.linear}. Now we can apply \eqref{eq.linear} as follows:
\begin{eqnarray*}
[v_m]_{C_{t,x}^{\beta/2s,\beta}(\overline{Q}_{R,m}^+)} & = & \frac{1}{(r'_m)^{\alpha}\theta(r'_m)}[u_m/d_m^s-(p_m\cdot x+q_m)]_{C_{t,x}^{\beta/2s,\beta}(Q_{r'_mR,k_m}^+)} \\
			& = & \frac{R^\alpha}{(Rr'_m)^{\alpha}\theta(r'_m)}[u_m/d_m^s-(p_m\cdot x+q_m)]_{C_{t,x}^{\beta/2s,\beta}(Q_{r'_mR,k_m}^+)} \\
                         & \leq & \frac{R^\alpha}{(Rr'_m)^\alpha\theta(r'_m)}[u_m/d_m^s-(p_{k_m,Rr'_m}\cdot x+q_{k_m,Rr'_m})]_{C_{t,x}^{\beta/2s,\beta}(Q_{r'_mR,k_m}^+)}\\
                         &  & +\frac{R^\alpha}{(Rr'_m)^\alpha\theta(r'_m)}[p_{k_m,Rr'_m}\cdot x-q_{k_m,Rr'_m}-(p_m\cdot x+q_m)]_{C_{t,x}^{\beta/2s,\beta}(Q_{r'_mR,k_m}^+)} \\
                         & \leq & \frac{R^\alpha\theta(Rr'_m)}{\theta(r'_m)}+C\frac{R^\alpha}{(Rr'_m)^\alpha\theta(r'_m)}(Rr'_m)^\alpha\theta(r'_m)\\
			& \leq & CR^\alpha,
\end{eqnarray*}
and we get \eqref{eq.boundv}.

When $R=1$, \eqref{eq.boundv} implies that the oscillation of $v_m$ in $Q_1^+$ is bounded by some universal constant $C$, which in turn implies
\[
\|v_m-M\|_{L^\infty{(Q_{1,m}^+)}}\leq C
\]
for some $M$. This gives, by \eqref{eq.ort}
\[
\|v_m\|_{L^\infty{(Q_{1,m}^+)}}\leq C.
\]
Finally, the bound on the H\"older seminorm \eqref{eq.boundv} and the $L^\infty$ bound in $Q_1^+$ give
\begin{equation}\label{eq.unifv}
\|v_m\|_{L^\infty{(Q_{R,m}^+)}}\leq CR^{\alpha+\beta}.
\end{equation}

By the Arzel\`a-Ascoli theorem, the previous bounds imply that a subsequence of $\{v_m\}$ converges uniformly on compact subsets of $(-\infty,0)\times\mathbb{R}^n$ to
a uniformly continuous function $v$. Let
\begin{equation}\label{eq.wm}
w_m(t,x):=v_m(t,x)\bar{d}_m^s(x)=\frac{u_m((r'_m)^{2s}t,r'_mx)}{(r'_m)^{\alpha+\beta+s}\theta(r'_m)}-\frac{\bar{d}_m^s(x)(p_m\cdot r'_mx+q_m)}{(r'_m)^{\alpha+\beta}\theta(r'_m)}
\end{equation}
and $w(t,x)=v(t,x)(x_n)_+^s$.

We claim that $w$ satisfies the hypothesis of the Liouville-type Theorem \ref{thm.liou}. To see this, note that, by \eqref{eq.eqk} (and for fixed $h$ and $\tau$) and the scaling of the equation
\begin{eqnarray*}
\frac{(\partial_t-L_{k_m})(u_m((r'_m)^{2s}(t+\tau),r'_m(x+h))-u_m((r'_m)^{2s}t,r'_mx))}{(r'_m)^{\alpha+\beta+s}\theta(r'_m)}\\
=\frac{f_{k_m}((r'_m)^{2s}(t+\tau),r'_m(x+h))-f_{k_m}((r'_m)^{2s}t,r'_mx)}{(r'_m)^{\alpha+\beta-s}\theta(r'_m)} \leq \frac{C[f_{k_m}]_{C_{t,x}^{\gamma/2s,\gamma}(Q_1^+)}}{(r'_m)^{\alpha+\beta-s-\gamma}\theta(r'_m)} 	
\end{eqnarray*}
in $\overline{\Omega}_m^+$ and this last term goes to 0 as $m\rightarrow \infty$ (recall $\alpha+\beta=s+\gamma$).

On the other hand, using Lemma 8.3 in \cite{RS2} and the definition of $\bar{d}_m$ we get
\begin{eqnarray*}
L_{k_m}(\bar{d}_m^s(x+h)(p_m\cdot r'_m(x+h)+q_m)-\bar{d}_m^s(x)(p_m\cdot r'_mx+q_m)) \\
\leq C(r'_m)^{s+\gamma}(|p_m|+|q_m|).
\end{eqnarray*}
So we need to get the appropriate bounds on $|p_m|$ and $|q_m|$. But with the same proof as in Proposition 8.2 in \cite{RS2} we get that, for $r\in[2^{-i},2^{-i+1}]$
\[
 \frac{|p_{k,r}|+|q_{k,r}|}{\theta(r)}\leq C\sum_{j=0}^i\frac{\theta(2^{-j})}{\theta(r)}(1/2)^{j(\alpha+\beta-1)}.
\]
which goes to 0 as $r\searrow0$. Hence, recalling \eqref{eq.wm}, we have that
\[
(\partial_t-L_{k_m})(w_m(t+\tau,x+h)-w_m(t,x))\longrightarrow 0
\]
uniformly on compact sets as $m\rightarrow\infty$. But thanks to Lemma \ref{lem.limwm} (condition 3 follows easily from \eqref{eq.boundv}, \eqref{eq.unifv} and the definition of $\alpha$ and $\beta$),
this implies
\[
\partial_t(w(t+\tau,x+h)-w(t,x))-L(w(t+\tau,x+h)-w(t,x))=0
\]
in $\mathbb{R}^n_+$ for $h\in\mathbb{R}^n,h_n\geq0$, $\tau<0$ and some $L$ of the form \eqref{eq.L}. Also $w=0$ in $\mathbb{R}^n_-$ by the uniform convergence.

Moreover, because of \eqref{eq.boundv} $w$ also satisfies the growth condition
\[
[w/(x_n)_+^s]_{C_{t,x}^{\beta/2s,\beta}(Q_R^+)}\leq CR^\alpha,\quad R\geq 1
\]
so we can apply by Theorem \ref{thm.liou} and we get that $w(t,x)=(x_n)_+^s(p\cdot x+q)$ for some $p\in\R^n$ and $q\in\R$ or, equivalently, $v(t,x)=p\cdot x+q$. This, together
with \eqref{eq.ort} gives that $v\equiv0$ so passing to the limit in \eqref{eq.boundvbel} we get a contradiction.
\end{proof}

The following proposition gives a plane (the same) for all $r$.
\begin{prop}
\label{prop.plane}
Let $\alpha,\beta\in(0,1)$ with $\alpha+\beta\neq1$ and let $v$ satisfy
\begin{equation}
\sup_{r>0}r^{-\alpha}[v-(p_r\cdot x+q_r)]_{C_{t,x}^{\beta/2s,\beta}((-r^{2s},0)\times (B_r\cap\Omega^+))}\leq C_0
\end{equation}
with $p_r=0$ if $\alpha+\beta<1$, if not assume that $|p_1|\leq C_0$. Then there exist $p\in\R^n$ and $q\in\R$ such that
\[
 p=\lim_{r\rightarrow0}p_r\quad\textrm{ and } \quad  q=\lim_{r\rightarrow0}q_r
\]
and for all $r>0$
\[
 \|v-(p\cdot x+q)\|_{L^\infty((-r^{2s},0)\times (B_r\cap\Omega^+)))}\leq CC_0r^{\alpha+\beta} \quad |p|\leq CC_0
\]
where $C$ depends only on $\alpha$ and $\beta$.
\end{prop}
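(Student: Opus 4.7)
The plan is a standard dyadic telescoping argument that compares the approximating polynomials $p_r\cdot x+q_r$ at consecutive scales $r$ and $r/2$. Since the hypothesis controls only a H\"older seminorm (which is blind to additive constants), the first step is to upgrade it to an $L^\infty$ oscillation bound so that the comparison is meaningful.

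I would begin by observing that the bound on the $C_{t,x}^{\beta/2s,\beta}$ seminorm of $v-(p_r\cdot x+q_r)$ controls its oscillation on $Q_r^+=(-r^{2s},0)\times(B_r\cap\Omega^+)$ by $CC_0r^{\alpha+\beta}$. Replacing $q_r$ by $q_r$ plus a constant (half this oscillation) does not affect the seminorm, so one may assume
\[
\|v-(p_r\cdot x+q_r)\|_{L^\infty(Q_r^+)}\leq CC_0 r^{\alpha+\beta}
\]
for every $r>0$, while leaving the vectors $p_r$ untouched. The next step is to compare consecutive scales: applying this $L^\infty$ bound at scales $r$ and $r/2$ on the common set $Q_{r/2}^+$ and using the triangle inequality gives
\[
\sup_{(t,x)\in Q_{r/2}^+}\bigl|(p_r-p_{r/2})\cdot x+(q_r-q_{r/2})\bigr|\leq CC_0 r^{\alpha+\beta}.
\]
Evaluating at pairs of points in $B_{r/2}\cap\Omega^+$ whose difference realizes any prescribed direction (possible for $r$ small thanks to the $C^{2,\gamma}$ regularity of $\Gamma$ with $\nu(0)=e_n$) extracts
\[
|p_r-p_{r/2}|\leq CC_0 r^{\alpha+\beta-1},\qquad |q_r-q_{r/2}|\leq CC_0 r^{\alpha+\beta}.
\]

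Then I would telescope along the dyadic sequence $r_k=2^{-k}$. When $\alpha+\beta>1$ the geometric series $\sum_k 2^{-k(\alpha+\beta-1)}$ converges, so $\{p_{r_k}\}$ is Cauchy with limit $p$ satisfying $|p_r-p|\leq CC_0 r^{\alpha+\beta-1}$; when $\alpha+\beta<1$ the hypothesis forces $p_r\equiv 0$, hence $p=0$. In both regimes $\sum_k 2^{-k(\alpha+\beta)}$ converges (since $\alpha+\beta>0$), so $\{q_{r_k}\}$ is Cauchy with limit $q$ and $|q_r-q|\leq CC_0 r^{\alpha+\beta}$. Combining these rates with the normalization yields, for every $r>0$,
\[
\|v-(p\cdot x+q)\|_{L^\infty(Q_r^+)}\leq CC_0 r^{\alpha+\beta}+|p_r-p|\,r+|q_r-q|\leq CC_0 r^{\alpha+\beta},
\]
while $|p|\leq CC_0$ follows from $|p|\leq|p_1|+|p-p_1|\leq C_0+CC_0$ in the case $\alpha+\beta>1$ and is trivial otherwise.

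The step I expect to be the most delicate is the normalization in the first paragraph: the $q_r$ provided by the hypothesis are only pinned down up to a constant by a seminorm bound, so one has to verify that the shift required to pass to an $L^\infty$ normalization is itself $O(r^{\alpha+\beta})$ and hence does not destroy Cauchy-ness of the sequence, so that the limit $q$ so obtained is consistent with the claim that $q_r\to q$. Once that observation is in place, the remainder of the argument is a routine geometric-series computation using only that $\alpha+\beta\neq 1$.
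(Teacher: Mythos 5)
Your proof is correct and is essentially the argument of Lemma~7.4 in \cite{RS2}, which is all the paper itself invokes for this proposition; the dyadic Campanato iteration (normalize to $L^\infty$, compare consecutive scales via the triangle inequality, extract $|p_r-p_{r/2}|$ using that $B_{r/2}\cap\Omega^+$ contains a ball of radius comparable to $r$, telescope the geometric series using $\alpha+\beta\neq 1$) is exactly the standard route. The one point you flag as delicate---whether the shift taking $q_r$ to an $L^\infty$-normalized $\tilde q_r$ is $O(r^{\alpha+\beta})$, so that $\lim q_r=\lim\tilde q_r$---is in fact automatic in the setting where this proposition is applied: the $p_r,q_r$ come from Proposition~\ref{prop.mainprop} as the least-squares projection on $Q_r^+$, so the orthogonality relation $\int_{Q_r^+}\bigl(v-(p_r\cdot x+q_r)\bigr)\,dx\,dt=0$ forces the remainder to change sign, and combined with the oscillation bound $\mathrm{osc}_{Q_r^+}\bigl(v-(p_r\cdot x+q_r)\bigr)\leq CC_0r^{\alpha+\beta}$ this already yields $\|v-(p_r\cdot x+q_r)\|_{L^\infty(Q_r^+)}\leq CC_0r^{\alpha+\beta}$ with no shift at all. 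So no normalization step is needed, and the remainder of your telescoping goes through verbatim.
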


\begin{proof}
The proof is the same as in Lemma 7.4 in \cite{RS2}.
\end{proof}

\section{Proof of the main theorem}\label{sec4}

In this section we prove Theorem \ref{thm.main}. We will need the following decay on the H\"older norm (recall the definition of the seminorm in Remark \ref{rmk.seminorm}):

\begin{prop}\label{prop.interior}
Let $s\in(0,1)$, $\gamma\in(0,s)$, $\beta\in(\gamma,1)$ and $\alpha=s+\gamma-\beta\in(0,s)$.
Let $w$ be a solution of
\[
\partial_tw-Lw=f\quad\textrm{ in }\quad (-1,0)\times B_1(e_n)
\]
with $e_n=(0,\ldots,0,1)$.  Assume that $a\in C^{1,\gamma}(S^{n-1})$, $f\in C^{\frac{\gamma}{2s},\gamma}_{t,x}((-1,0)\times B_1(e_n))$ and
\[
\| w\|_{L^\infty((-\infty,0)\times\R^n)}<\infty, \quad \| w\|_{L^\infty(Q_r)}\leq Cr^{\alpha+\beta+s},\quad [w]_{C_{t,x}^{\beta/2s,\beta}(Q_r(-(2r)^{2s},2re_n))}\leq Cr^{\alpha+s}
\]
for all $r>0$. Then, we have
\[
[w]_{C^{\frac{\alpha+\beta}{2s},\alpha+\beta}_{t,x}(Q_{r/2}(-(2r)^{2s},2re_n))}\leq Cr^s
\]
and
\[
[w]_{C^{\frac{\alpha+\beta+s}{2s},\alpha+\beta+s}_{t,x}(Q_{r/2}(-(2r)^{2s},2re_n))}\leq C.
\]
\end{prop}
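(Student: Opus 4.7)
The plan is to establish the higher-order estimate by rescaling and invoking the interior Schauder-type estimates for parabolic nonlocal equations from \cite{FR1}. The lower-order estimate then follows by interpolating the higher-order bound against the $L^\infty$ growth hypothesis.

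For the higher-order estimate, fix $r\in(0,r_0)$ small and set $t_1=-(2r)^{2s}$, $x_1=2re_n$. Define the rescaled function
\[
\tilde w(t,x):=r^{-(\alpha+\beta+s)}\,w(t_1+r^{2s}t,\,x_1+rx),
\]
which satisfies $\partial_t\tilde w-L\tilde w=\tilde g$ on $Q_1$ with $\tilde g(t,x):=r^{-\gamma}f(t_1+r^{2s}t,x_1+rx)$, since $L$ is $2s$-homogeneous. The normalization $\mu=r^{\alpha+\beta+s}$ is precisely the one that reduces the target $[w]_{C^{(\alpha+\beta+s)/2s,\alpha+\beta+s}_{t,x}(Q_{r/2}(t_1,x_1))}\le C$ to $[\tilde w]_{C^{(\alpha+\beta+s)/2s,\alpha+\beta+s}_{t,x}(Q_{1/2})}\le C$. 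The hypotheses on $w$ give $\|\tilde w\|_{L^\infty(Q_1)}\le C$, $[\tilde w]_{C^{\beta/2s,\beta}_{t,x}(Q_1)}\le C$, the global polynomial bound $|\tilde w(t,y)|\le C(1+|y|)^{\alpha+\beta+s}$ (the two growth hypotheses saturate the global $L^\infty$ bound exactly at $|y|\sim 1/r$), and $[\tilde g]_{C^{\gamma/2s,\gamma}_{t,x}(Q_1)}\le C$. While this Hölder seminorm is bounded, the $L^\infty$ norm $\|\tilde g\|_{L^\infty(Q_1)}$ is of order $r^{-\gamma}$.

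To apply a Schauder-type estimate to $\tilde w$, two adjustments are required. First, subtract $c_0:=\tilde g(0,0)$ and work with $h(t,x):=\tilde w(t,x)-c_0t$, which solves the same equation with right-hand side $\tilde g-c_0$, now bounded in $L^\infty(Q_1)$; since $c_0 t$ is linear in $t$ and independent of $x$, it contributes nothing to the higher-order seminorms of interest. Second, handle the nonlocal tail via a cutoff $\eta\in C^\infty_c(B_3)$ equal to $1$ on $B_2$: the localized function $\eta\tilde w$ (which coincides with $\tilde w$ on $Q_{1/2}$) solves on $Q_1$ an equation with extra right-hand side $L((1-\eta)\tilde w)$. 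This term is smooth in $x$ on $Q_1$, and a uniform $C^{\gamma/2s,\gamma}_{t,x}(Q_1)$ bound on it is obtained by exploiting the smoothness of $K(y-x)$ for $|y-x|\ge 1$ together with the global bound on $|\tilde w(t,y)|$ (for the spatial modulus), and by applying the hypothesized Hölder control $[w]_{C^{\beta/2s,\beta}(Q_\rho(-(2\rho)^{2s},2\rho e_n))}\le C\rho^{\alpha+s}$ at each dyadic scale $\rho\sim r|y|$ to bound $|\tilde w(t,y)-\tilde w(t',y)|$ pointwise in $y$, then integrating against the kernel (for the temporal modulus). The Schauder estimate of \cite{FR1} applied to the bounded, compactly supported function $\eta h$ then yields $[\tilde w]_{C^{(\alpha+\beta+s)/2s,\alpha+\beta+s}_{t,x}(Q_{1/2})}\le C$, which unscales to the higher-order estimate.

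Once the higher-order estimate is in hand, the lower-order estimate $[w]_{C^{(\alpha+\beta)/2s,\alpha+\beta}_{t,x}(Q_{r/2}(t_1,x_1))}\le C r^s$ follows by standard interpolation against the $L^\infty$ growth bound $\|w\|_{L^\infty(Q_{r/2}(t_1,x_1))}\le C r^{\alpha+\beta+s}$, using exponents $(\alpha+\beta)/(\alpha+\beta+s)$ and $s/(\alpha+\beta+s)$. The principal obstacle throughout is the uniform control of $L((1-\eta)\tilde w)$ in $r$: the naive tail integral $\int(1+|y|)^{\alpha+\beta+s}|y|^{-n-2s}\,dy$ diverges since $\alpha+\beta+s=2s+\gamma>2s$, so the uniform bound genuinely requires the dyadic temporal analysis described above rather than any simple $L^\infty$-tail argument.
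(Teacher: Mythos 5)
Your high-level strategy coincides with the paper's: rescale to unit scale, localize via a spatial cutoff so that the nonlocal tail appears as an extra source term, apply an interior Schauder-type estimate to the localized function, and rescale back. The key structural difference is in what Schauder estimate you invoke. The paper does \emph{not} cite an off-the-shelf estimate from \cite{FR1} at the top order $2s+\gamma$; instead the whole Appendix is devoted to proving Lemma \ref{lem.liou} (a Liouville theorem for \emph{increments} of entire solutions) and Proposition \ref{prop.blowup} (a blow-up Schauder estimate whose right-hand side is the global $C^{\beta/2s,\beta}$ seminorm of $w$, with $f$ normalized implicitly), and it is Proposition \ref{prop.blowup}, estimate \eqref{eq.intest}, that is applied to $\tilde w\eta$. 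The reason for this is precisely the normalization issue you yourself flag: after rescaling, $\|\tilde g\|_{L^\infty}\sim r^{-\gamma}$ blows up, and one needs a Schauder estimate that sees only the Hölder \emph{seminorm} of the data, not the $L^\infty$ norm. The increment formulation of the Liouville theorem is the device that delivers such an estimate; your route replaces it by an appeal to \cite{FR1}, which has to be checked to have the required seminorm form.

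There is a concrete gap in your normalization step. You subtract $c_0 t$ to work with $h=\tilde w-c_0 t$ and then apply the Schauder estimate to $\eta h$. But the localization does not commute with this subtraction: in $\{\eta=1\}$ one has $\partial_t(\eta h)-L(\eta h)=(\tilde g-c_0)+L((1-\eta)\tilde w)+c_0\,t\,L\eta(x)$, and the new term $c_0\,t\,L\eta(x)$ has temporal Hölder seminorm
\[
[c_0\,t\,L\eta]_{C^{\gamma/2s}_t((-1,0)\times\R^n)}\;\sim\;|c_0|\,\|L\eta\|_{L^\infty}\;=\;O(r^{-\gamma}),
\]
which is exactly the blow-up you were trying to suppress. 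Likewise, $\eta h=\eta\tilde w-c_0 t\eta$ is \emph{not} uniformly bounded: $\|c_0 t\eta\|_{L^\infty((-1,0)\times\R^n)}=O(r^{-\gamma})$. So the function you propose to feed into the Schauder estimate has neither a bounded $L^\infty$ norm nor a bounded $C^{\gamma/2s}_t$ seminorm globally, and the conclusion $[\tilde w]_{C^{(\alpha+\beta+s)/2s,\alpha+\beta+s}(Q_{1/2})}\le C$ does not follow with a constant independent of $r$. The correct fix is to normalize by working with incremental quotients of the solution from the start (so that both $w$ and $f$ are replaced by differences, which are automatically controlled by seminorms); this is precisely what the paper's Lemma \ref{lem.liou} and Proposition \ref{prop.blowup} encode. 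Alternatively one can cut off first and then verify that the relevant estimate of \cite{FR1} depends only on $[F]_{C^{\gamma/2s,\gamma}}$ and not on $\|F\|_{L^\infty}$; this is plausible but must be checked against the actual statement, and your write-up takes it for granted.

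A second, smaller point: your control of the temporal modulus of the tail $L((1-\eta)\tilde w)$ relies on the hypothesis $[w]_{C^{\beta/2s,\beta}(Q_\rho(-(2\rho)^{2s},2\rho e_n))}\le C\rho^{\alpha+s}$ ``at each dyadic scale $\rho\sim r|y|$''. The cylinders $Q_\rho(-(2\rho)^{2s},2\rho e_n)$ recede in time as $\rho$ grows, whereas the tail integral sees points at the \emph{same} time levels $t_1+r^{2s}\tau$ with $|\tau|\le 1$; these do not lie in $Q_\rho(\cdots)$ for $\rho\gg r$. So the hypothesis does not, by itself, furnish the pointwise temporal Hölder bound on $\tilde w(t,y)$ at large $|y|$ that you want to integrate. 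Your interpolation step for the lower-order estimate, on the other hand, is a legitimate and slightly more economical alternative to the paper's derivation (the paper reads both conclusions directly off the two estimates contained in Proposition \ref{prop.blowup}).
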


We will prove Proposition \ref{prop.interior} in the Appendix. 
The next proposition, combined with the interior estimates, will give the proof of Theorem \ref{thm.main}:

\begin{prop}\label{prop.estimate}
Let $s\in(0,1)$, $\Gamma$ be $C^{2,\gamma}$ surface splitting $\R^n$ into $\Omega^+$ and $\Omega^-$ and $u$ be a solution of
\begin{equation}
  \left\{ \begin{array}{rcll}
  \de_t u - L u & = & f & \textrm{in }(-1,0)\times \Omega^+\cap B_1 \\
   u & = & 0 & \textrm{in } (-1,0)\times\Omega^-
  \end{array}\right.
\end{equation}
with $L$ of the form \eqref{eq.L}.
Assume $a\in C^{1,\gamma}(S^{n-1})$ and $f\in C_{t,x}^{\frac{\gamma}{2s},\gamma}((-1,0)\times\Omega^+\cap B_1)$ with $\gamma\in(0,s)$ and
$\gamma+2s$ and $\gamma+s$ not integers.
Let
\[
C_0=\|u\|_{C_t^{\gamma/2s}((-1,0)\times\R^n)}+\|f\|_{C_{t,x}^{\gamma/2s,\gamma}((-1,0)\times\Omega^+\cap B_1)}.
\]
Then, $\partial_tu\in C_{t,x}^{\gamma/2s,\gamma}((-1/2,0)\times \overline{B_{1/2}})$ and $u/d^s\in C_{t,x}^{1/2+\gamma/2s,s+\gamma}((-1/2,0)\times\overline{\Omega^+}\cap B_{1/4})$ and
\begin{equation}
\|\partial_tu\|_{C_{t,x}^{\gamma/2s,\gamma}\left(\left(-\frac{1}{2},0\right)\times \overline{B_{1/2}}\right)} + \left\| u/d^s \right\|_{C_{t,x}^{1/2+\gamma/2s,s+\gamma}\left(\left(-\frac{1}{2},0\right)\times\overline{\Omega^+}\cap B_{1/4}\right)} \leq CC_0.
\end{equation}
The constant $C$ depends only on $\gamma$, $\|a\|_{C^{1,\gamma}(S^{n-1})}$, $n$, $s$, and the ellipticity constants.
\end{prop}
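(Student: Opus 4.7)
The plan is to combine the boundary expansion coming from Propositions \ref{prop.mainprop} and \ref{prop.plane} with the interior-near-boundary decay estimate Proposition \ref{prop.interior}, following the strategy of \cite{RS2} in the elliptic case.

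First I would establish a preliminary H\"older bound $u/d^s \in C_{t,x}^{\beta/2s,\beta}$ for some $\beta \in (\gamma,1)$, with norm controlled by $CC_0$. This is available from the first-order boundary regularity in \cite{FR1}, which yields $u/d^s\in C^{s-\varepsilon}$ up to the boundary in the parabolic sense; taking $\varepsilon$ small enough gives some $\beta>\gamma$. After appropriate rescaling this puts us in the hypothesis \eqref{eq.betanorm} of Proposition \ref{prop.mainprop}. Applying that proposition at each $x_0\in\Gamma\cap B_{1/2}$ (after translating $x_0$ to the origin and rotating so that $\nu(x_0)=e_n$) and invoking Proposition \ref{prop.plane} with $\alpha=s+\gamma-\beta$ produces, at each such $x_0$, a linear polynomial $P_{x_0}(x)=q_{x_0}+p_{x_0}\cdot (x-x_0)$ satisfying
\[
\|u/d^s - P_{x_0}\|_{L^\infty((-r^{2s},0)\times(B_r(x_0)\cap\Omega^+))} \le CC_0\, r^{s+\gamma}
\]
for $r\in(0,r_0)$. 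Comparing these expansions at two boundary points $x_0,y_0$ in an overlapping cylinder of size comparable to $|x_0-y_0|$ yields $|q_{x_0}-q_{y_0}|\le C|x_0-y_0|^{s+\gamma}$ and, when $s+\gamma>1$, $|p_{x_0}-p_{y_0}|\le C|x_0-y_0|^{s+\gamma-1}$; an analogous comparison in the time variable yields the $C_t^{(s+\gamma)/2s}$ bound of $u/d^s$ along $\Gamma$.

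For points $z$ in the interior but close to $\Gamma$, set $\rho := \mathrm{dist}(z,\Gamma)$, let $x_0$ denote the projection of $z$ onto $\Gamma$, and define
\[
w(t,x) := u(t,x) - d^s(x)\,P_{x_0}(x).
\]
The boundary expansion yields $\|w\|_{L^\infty(Q_r(0,x_0))}\le CC_0\, r^{2s+\gamma}$, while $w$ solves $\de_t w - Lw = f - L(d^s P_{x_0})$ whose right-hand side is $C_{t,x}^{\gamma/2s,\gamma}$ with norm $\le CC_0$ by Lemma 8.3 of \cite{RS2}. After straightening $\Gamma$ near $x_0$ and rescaling the cylinder $Q_\rho(0,z)$ to unit scale, $w$ fulfills the hypotheses of Proposition \ref{prop.interior}, which then gives $[w]_{C_{t,x}^{(2s+\gamma)/2s,\,2s+\gamma}}\le CC_0$ on an interior cylinder of size $\sim\rho$ around $(t_0,z)$. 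Translating this back into estimates on $\de_t u$ and $D^2_x u$, and combining with the boundary H\"older continuity of the $P_{x_0}$'s from the previous step (together with standard interior estimates far from $\Gamma$), yields the full bound \eqref{eq.estimate}.

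The main obstacle I anticipate lies in the interior-near-boundary step: verifying carefully that $w$ fits the hypotheses of Proposition \ref{prop.interior} after the flattening. Two nontrivial points need to be checked: (i) controlling the global $L^\infty$ norm of $w$ on $(-\infty,0)\times\R^n$ by $CC_0$, which requires truncating $P_{x_0}$ outside $B_1$ and using the $C_t^{\gamma/2s}$ bound in the definition of $C_0$; and (ii) showing that $L(d^s P_{x_0})$ has $C^{\gamma/2s,\gamma}_{t,x}$ norm bounded by $CC_0$ uniformly in $x_0$. Point (ii) is precisely where the regularity assumptions $a\in C^{1,\gamma}(S^{n-1})$ and $\Gamma\in C^{2,\gamma}$ enter, via Lemma 8.3 of \cite{RS2}. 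A final subtlety is that $s+\gamma$ may exceed $1$, which requires using the higher-order H\"older seminorm of Remark \ref{rmk.seminorm} throughout.
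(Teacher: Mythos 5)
Your proposal follows essentially the same route as the paper's: obtain a preliminary $C_{t,x}^{\beta/2s,\beta}$ bound on $u/d^s$ from \cite{FR1} so that Propositions~\ref{prop.mainprop} and~\ref{prop.plane} apply and yield a linear boundary expansion $P_{x_0}$; subtract $d^sP_{x_0}$; feed the resulting growth bounds for $w=u-d^sP_{x_0}$ into Proposition~\ref{prop.interior}; recover the H\"older seminorms of $u/d^s$ and $\partial_t u$; and finish with a covering argument combined with ordinary interior estimates away from $\Gamma$. The paper packages the recovery of $[u/d^s]_{C^{\beta/2s,\beta}}$ in a fixed cylinder $Q_r(t_0,x_0)$ via the decomposition $\mathcal A+\mathcal B+\mathcal C$ (and handles time regularity by observing $\partial_t(u-d^sl)=\partial_t u$), while you phrase the same bookkeeping as a comparison of the planes $P_{x_0}$ and $P_{y_0}$ at nearby boundary points; these are equivalent. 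You also correctly identify the two places where the hypotheses must be carefully verified: the global $L^\infty$ control of $w$ (handled by the $C_t^{\gamma/2s}$ bound and the compact support of $d$) and the $C^{\gamma/2s,\gamma}_{t,x}$ bound on $L(d^sP_{x_0})$ via Lemma~8.3 of \cite{RS2}.

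One small caution on your proposed ``straightening of $\Gamma$'' before applying Proposition~\ref{prop.interior}: a boundary flattening is a nonaffine change of variables, so it does not preserve the translation-invariant form \eqref{eq.L} of $L$, and Propositions~\ref{prop.blowup} and~\ref{prop.interior} are stated only for such $L$. You would have to carry along the error terms produced by the changed coefficients, which is an unnecessary complication. The paper avoids this entirely: Proposition~\ref{prop.interior}, although stated with the half-space $B_1(e_n)$ for concreteness, is really an interior estimate in the ball $B_{d(x_0)/2}(x_0)$ (the boundary only enters through the prescribed growth of $w$ near it), so it can be invoked directly on $w=u-d^sl$ without flattening, with the cylinder rescaled by $r=d(x_0)/2$. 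Replacing your flattening step with this direct application makes the argument run exactly as in the paper.
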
	

\begin{proof}
Let us assume that
\[
\|u\|_{C_t^{\gamma/2s}((-1,0)\times\R^n)}+\|f\|_{C_{t,x}^{\gamma/2s,\gamma}((-1,0)\times\Omega^+\cap B_1)}\leq 1,
\]
then we want to show that
\begin{equation}\label{eq.spatimereg}
\|\partial_tu\|_{C_{t,x}^{\gamma/2s,\gamma}\left(\left(-\frac{1}{2},0\right)\times B_{1/2}\right)} + \left\| u/d^s \right\|_{C_{t,x}^{1/2+\gamma/2s,s+\gamma}\left(\left(-\frac{1}{2},0\right)\times\overline{\Omega^+}\cap B_{1/4}\right)} \leq C.
\end{equation}

We will bound each term separately and combine interior estimates with estimates close to the boundary.

\vspace{2mm}

\noindent{\bf Step 1: spatial regularity.} Let $(t_0,x_0)\in(-1/2,0)\times \overline{\Omega^+}\cap B_{1/4}$,
\[
 2r=\textrm{dist}(x_0,\Gamma)<1/4,
\]
we will start with the following estimate:
\begin{equation}\label{eq.spareg}
\|u/d^s\|_{C_{t,x}^{1/2+\gamma/2s,\gamma+s}(Q_r(t_0,x_0))}\leq C
\end{equation}
for a universal constant $C$. Thanks to Lemma 4.8 in \cite{FR1}, \eqref{eq.spareg} ammounts to bound $[u/d^s]_{C_{t,x}^{1/2+\gamma/2s,\gamma+s}(Q_r(t_0,x_0))}$. We will prove the
case $\gamma+s>1$ which is more difficult, the case $\gamma+s<1$ follows similarly.

By Propositions \ref{prop.mainprop} and \ref{prop.plane}, we have that for
$z\in \Gamma$ such that
\[
\textrm{dist}(x_0,\Gamma)=\textrm{dist}(x_0,z)
\]
there exist $l(x)$ defined by
\[
l(x)=p(t_0,z)\cdot x+q(t_0,z)
\]
such that
\[
\|u/d^s-l\|_{L^\infty(Q_{2r}(t_0,x_0))}\leq Cr^{\alpha+\beta}.
\]
and
\[
[u/d^s-l]_{C_{t,x}^{\beta/2s,\beta}(Q_{2r}(t_0,x_0))}\leq Cr^\alpha
\]
(with $\beta\in(\gamma,1)$ and $\alpha=s+\gamma-\beta\in(0,s)$).
This implies
\begin{equation}\label{eq.int1}
 \|u-d^sl\|_{L^\infty(Q_{2r}(t_0,x_0))}\leq Cr^{\alpha+\beta+s}.
\end{equation}
and
\begin{equation}\label{eq.int2}
[u-d^sl]_{C_{t,x}^{\beta/2s,\beta}(Q_{2r}(t_0,x_0))}\leq Cr^{s+\alpha}.
\end{equation}

Then, the estimates on Proposition \ref{prop.interior} give
\begin{equation}\label{eq.int3}
[u-d^sl]_{C_t^{\frac{\alpha+\beta}{2s}}(Q_r(t_0,x_0))}+[\nabla (u-d^sl)]_{C_{t,x}^{\frac{\nu}{2s},\nu}(Q_r(t_0,x_0))}\leq Cr^s
\end{equation}
with $\nu=\gamma+s-1$.

Let $(t,x),(t',x')\in Q_r(t_0,x_0)$. We want to show
\begin{equation}\label{eq.timesemi}
\bigl|u(t,x)/d^s(x)-u(t',x)/d^s(x)\bigr|\leq C|t-t'|^{\frac{\alpha+\beta}{2s}}
\end{equation}
with $C$ independent of $x$ and
\begin{equation}\label{eq.gradsemi}
\bigl|\nabla (u(t,x)/d^s(x))-\nabla (u(t',x')/d^s(x'))\bigr|\leq C(|t-t'|^{\nu/2s}+|x-x'|^\nu).
\end{equation}

\eqref{eq.timesemi} follows using \eqref{eq.int3} and noting that the only $t$ dependence is by $u$ and that $d^s$ and $r^s$ are comparable. More precisely:
\begin{eqnarray*}
u(t,x)/d^s(x)-u(t',x)/d^s(x) & \leq & d^{-s}(x)[u-d^sl]_{C_t^{\frac{\alpha+\beta}{2s}}(Q_r(t_0,x_0))}|t-t'|^{\frac{\alpha+\beta}{2s}} \\
			    & \leq &  Cd^{-s}(x)r^s|t-t'|^{\frac{\alpha+\beta}{2s}} \\
			    & \leq & C|t-t'|^{\frac{\alpha+\beta}{2s}}.
\end{eqnarray*}

For \eqref{eq.gradsemi} we have
\[
 \nabla (u(t,x)/d^s(x))=\nabla u(t,x)/d^s(x)+u(t,x)\nabla(d^{-s}(x)).
\]
Let us write
\[
 \nabla (u(t,x)/d^s(x))-\nabla (u(t',x')/d^s(x')) = \mathcal{A}+\mathcal{B}+\mathcal{C},
\]
with
\begin{eqnarray*}
 \mathcal{A} & = & \frac{\nabla (u(t,x)-d^s(x)l(x))-\nabla (u(t',x')-d^s(x')l(x'))}{d^s(x)} \\
 \mathcal{B} & = & (d^{-s}(x)-d^{-s}(x'))\nabla (u(t',x')-d^s(x')l(x')) \\
 \mathcal{C} & = & (u(t,x)-d^s(x)l(x))\nabla d^{-s}(x) \\
   &  & - (u(t',x')-d^s(x')l(x'))\nabla d^{-s}(x').
\end{eqnarray*}

Now, by \eqref{eq.int3} we have
\[
 |\mathcal{A}|\leq Cr^sd^{-s}(x)(|t-t'|^{\frac{\nu}{2s}}+|x-x'|^\nu)\leq C(|t-t'|^{\frac{\nu}{2s}}+|x-x'|^\nu)
\]
since $d^s$ and $r^s$ are comparable.

Now we need to bound $\mathcal{B}$:
\[
 |\mathcal{B}|= |(d^{-s}(x)-d^{-s}(x'))\nabla (u(t',x')-d^s(x')l(x'))|.
\]
On one hand, we have
\[
|d^{-s}(x)-d^{-s}(x')|\leq Cr^{-s}
\]
(see Lemma 5.5 in \cite{RS1}). On the other hand, \eqref{eq.int2} and \eqref{eq.int3} imply
\[
|\nabla (u(t',x')-d^s(x')l(x'))|\leq Cr^{\alpha+\beta+s-1}
\]
hence
\[
|\mathcal{B}|\leq Cr^{\alpha+\beta-1}\leq C(|t-t'|^{\frac{\nu}{2s}}+|x-x'|^\nu)
\]
since $r^\delta\sim|t-t'|^{\frac{\delta}{2s}}+|x-x'|^\delta$.

Finally, by a classical interpolation inequality in H\"older spaces,
\begin{eqnarray*}
|\mathcal{C}| & \leq & \|\nabla d^{-s}\|_{L^\infty(B_r(x_0))}[u-d^sl]_{C_{t,x}^{\beta/2s,\beta}(Q_r(t_0,x_0))}(|t-t'|^{\frac{\beta}{2s}}+|x-x'|^\beta) \\
    &  & + \|u-d^sl\|_{L^\infty(Q_r(t_0,x_0))}[\nabla d^{-s}]_{C^\beta(B_r(x_0))}(|t-t'|^{\frac{\beta}{2s}}+|x-x'|^\beta) \\
    & \leq & Cr^{-s-1}r^{\alpha+s}r^\beta+r^{\alpha+\beta+s}r^{-s-\beta}r^\beta\leq Cr^{\alpha+\beta-1}
\end{eqnarray*}
and we get \eqref{eq.gradsemi} and hence \eqref{eq.spareg}.

To pass from \eqref{eq.spareg} to bound the second term on the left hand side of \eqref{eq.spatimereg} just notice that we can cover $(-1/2,0)\times\overline{\Omega^+}\cap\{x_0\in B_{1/4}:\textrm{dist}(x_0,\Gamma)<1/8\}$ by a universal (dependent on $\Gamma$)
number of balls in which \eqref{eq.spareg} holds and use interior estimates for the points in $(-1/2,0)\times\overline{\Omega^+}\cap\{x_0\in B_{1/4}:\textrm{dist}(x_0,\Gamma)\geq 1/8\}$.

\vspace{2mm}

\noindent{\bf Step 2: time regularity.} For the time regularity, Proposition \ref{prop.interior} gives
\[
[u-d^sl]_{C_{t,x}^{\frac{\beta+\alpha+s}{2s},\beta+\alpha+s}(Q_{2r}(t_0,x_0))}\leq C
\]
for any $r$ sufficiently small. 
Since the function $d^sl$ is independent of time, this implies
\[
[\partial_t u]_{C_{t,x}^{\frac{\alpha+\beta-s}{2s},\beta+\alpha-s}(Q_r(t_0,x_0))}\leq C.
\]
Now recalling that $\alpha+\beta=s+\gamma$, we find
\[
[\partial_tu]_{C_{t,x}^{\gamma/2s,\gamma}(Q_r(t_0,x_0))}\leq C.
\]

Hence,
\begin{equation}\label{eq.timeest}
\|\partial_tu\|_{C_{t,x}^{\gamma/2s,\gamma}(Q_r(t_0,x_0))}\leq C.
\end{equation}
The bound on the first term of the left hand side of \eqref{eq.spatimereg} follows similarly as the second term followed from \eqref{eq.spareg}.
\end{proof}

\begin{proof}[Proof of Theorem \ref{thm.main}]
The Theorem follows combining the interior estimates from \cite{FR1} with the boundary estimates from the previous Proposition.
\end{proof}

Finally, we can also give the

\begin{proof}[Proof of Corollary \ref{cor.mainboundary}]
The proof is the same as that of Corollary 1.6 in \cite{FR1} but we sketch it here for completeness.
We can cover $\Omega$ with a (finite, universal) number of balls in which we can apply Proposition \ref{prop.estimate} to get
\[
\|\partial_tu\|_{C_{t,x}^{\gamma/2s,\gamma}\left(\left(1/2,1\right)\times \overline{\Omega}\right)} + \left\| u/d^s \right\|_{C_{t,x}^{1/2+\gamma/2s,s+\gamma}\left(\left(1/2,1\right)\times\overline{\Omega}\right)} \leq CC_1.
\]
with
\[
C_1=\|u\|_{C_{t}^{\gamma/2s}((1/4,1)\times\R^n)}+\|f\|_{C_{t,x}^{\gamma/2s,\gamma}((1/4,1)\times\Omega)}
\]
so we just need to bound $\|u\|_{C_{t}^{\gamma/2s}((1/4,1)\times\R^n)}$ properly (after that the result follows scaling in time). But notice that by Lemma 6.1 in
\cite{FR1} with $c_0 = \|f\|_{L^\infty\left(\left(0,1\right)\times\Omega\right)}$ and $t \geq t_0 = 1/4$,
\begin{align*}
\|u\|_{L^\infty\left(\left(\frac{1}{4},1\right)\times\Omega\right)} & \leq  C\left(\|u(1/4,\cdot)\|_{L^\infty\left(\Omega\right)}+\|f\|_{L^\infty\left(\left(\frac{1}{4},1\right)\times\Omega\right)}\right)\\
& \leq  C\left(\|u(0,\cdot)\|_{L^2\left(\Omega\right)}+\|f\|_{L^\infty\left(\left(0,1\right)\times\Omega\right)}\right).
\end{align*}
Since we have estimates for $\|u\|_{C_{t}^{\gamma/2s}((1/4,1)\times\R^n)}$ given in terms of $\|u\|_{L^\infty\left(\left(\frac{1}{4},1\right)\times\Omega\right)}$ we are done (recall
$u\equiv0$ outside $\Omega$).
\end{proof}

\section{Appendix}

In this Appendix we prove Proposition \ref{prop.interior}. The proof will follow the same compactness argument as the boundary regularity. 
We start with the following Liouville type result:

\begin{lem}\label{lem.liou}
Let $s\in(0,1)$, $\beta\in(0,1)$ and $\alpha\in(0,s)$ satisfying $\alpha+\beta<2s$. 
Let $w$ satisfy
\[
(\partial_t-L)(w(\cdot+\tau,\cdot+h)-w(\cdot,\cdot)) = 0 \:\textrm{ in }\: (-\infty,0)\times \mathbb{R}^n
\]
for all $h\in\R^n$ and $\tau<0$. 
Assume also that
\begin{equation}\label{eq.betagro}
[w]_{C_{t,x}^{\frac{\beta}{2s},\beta}(Q_R)}\leq CR^{\alpha+s}
\end{equation}
for all $R\geq 1$.
Then
\[
w(t,x)=at+x^TAx+p\cdot x+q
\]
for some $p\in\R^n$ and $a,q\in\R$. 
Moreover, if $\alpha+\beta<s$ then $a=0$, if $\alpha+\beta+s<2$ then $A=0$, and if $\alpha+\beta+s<1$ then $p=0$.
\end{lem}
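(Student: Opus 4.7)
Following the strategy of Theorem \ref{thm.liou}, my plan is to apply a Liouville principle to first-order differences of $w$ (which enjoy much better growth than $w$ itself) and then recover the polynomial form of $w$ from the functional identities (cocycles) satisfied by these differences.

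\textbf{Step 1 (Liouville for differences).} For each fixed $\tau<0$ and $h\in\R^n$, the difference $v_{\tau,h}(t,x):=w(t+\tau,x+h)-w(t,x)$ is by hypothesis a solution of the homogeneous equation in $(-\infty,0)\times\R^n$, and the bound \eqref{eq.betagro} gives, for $R$ large compared with $|\tau|^{1/2s}$ and $|h|$,
\[
\|v_{\tau,h}\|_{L^\infty(Q_R)}\leq C\bigl(|\tau|^{\beta/2s}+|h|^\beta\bigr)R^{\alpha+s},
\]
so that $v_{\tau,h}$ has polynomial growth of order $\alpha+s<2s$. By the whole-space parabolic Liouville theorem below growth $2s$ (the $\R^n$ analog of Theorem 4.11 in \cite{FR1}: the time variable in $Q_R$ has size $R^{2s}$, so no nontrivial time dependence is admitted, and $L$ annihilates affine polynomials), one obtains
\[
v_{\tau,h}(t,x)=p_{\tau,h}\cdot x+q_{\tau,h},
\]
with $p_{\tau,h}=0$ when $\alpha+s<1$.

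\textbf{Step 2 (cocycle and recovery of $w$).} The identity $v_{\tau_1+\tau_2,h_1+h_2}(t,x)=v_{\tau_1,h_1}(t+\tau_2,x+h_2)+v_{\tau_2,h_2}(t,x)$, inserted into the affine form from Step 1, produces
\[
p_{\tau_1+\tau_2,h_1+h_2}=p_{\tau_1,h_1}+p_{\tau_2,h_2},\qquad q_{\tau_1+\tau_2,h_1+h_2}=q_{\tau_1,h_1}+q_{\tau_2,h_2}+p_{\tau_1,h_1}\cdot h_2.
\]
Continuity of $w$ (and hence of $(\tau,h)\mapsto p_{\tau,h},q_{\tau,h}$) together with the symmetry $(\tau_1,h_1)\leftrightarrow(\tau_2,h_2)$ in the second equation forces both the $\tau$-dependence of $p$ and the antisymmetric part of the resulting matrix to vanish, leaving $p_{\tau,h}=Mh$ for some symmetric matrix $M$ and $q_{\tau,h}=a\tau+c\cdot h+\tfrac{1}{2}h^TMh$. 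Plugging back into $w(t+\tau,x+h)=w(t,x)+v_{\tau,h}(t,x)$ and evaluating at a base point $(t_0,0)\in(-\infty,0)\times\R^n$ recovers the desired form $w(t,x)=q+at+p\cdot x+x^TAx$ with $A=\tfrac{1}{2}M$, $p=c$, and $q$ a constant.

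\textbf{Step 3 (vanishing of coefficients).} The vanishing assertions follow by comparing the $C^{\beta/2s,\beta}$-seminorm of each monomial on $Q_R$ with the bound $CR^{\alpha+s}$: one computes
\[
[at]_{C_t^{\beta/2s}(Q_R)}\sim|a|R^{2s-\beta},\qquad [x^TAx]_{C_x^\beta(B_R)}\sim|A|R^{2-\beta},\qquad [p\cdot x]_{C_x^\beta(B_R)}\sim|p|R^{1-\beta},
\]
and letting $R\to\infty$ enforces $a=0$ if $\alpha+\beta<s$, $A=0$ if $\alpha+\beta+s<2$, and $p=0$ if $\alpha+\beta+s<1$. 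The main obstacle I anticipate is the appeal in Step 1 to the whole-space parabolic Liouville theorem below growth $2s$ for general operators of the form \eqref{eq.L}: if a direct reference is not available, this can be obtained either by a compactness/blow-down argument analogous to the proof of Theorem \ref{thm.liou}, or by iterating the first-difference procedure so that the remaining increment is bounded and one can invoke the classical Liouville for bounded solutions of $(\partial_t-L)u=0$.
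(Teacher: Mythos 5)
Your proposal is correct in its overall logic and takes a genuinely different route from the paper. The paper's proof never isolates a Liouville theorem for the individual differences $v_{\tau,h}$: instead it rescales $w$ by a factor $\rho$, normalizes the $(\tau,h)$-increment by $\rho^{\alpha+s}(|\rho\tau|^{\beta/2s}+|\rho h|^\beta)$, applies the interior parabolic Schauder estimates (Theorem 1.3 in \cite{FR1}) to obtain a $C_{t,x}^{\theta/2s,\theta}$ bound with $\theta>\alpha+s$, then invokes the Caffarelli--Cabr\'e incremental quotient lemma (\cite{CC}, Lemma 5.6) to upgrade this to a bound on $[w]_{C_{t,x}^{\frac{\theta+\beta}{2s},\theta+\beta}(Q_\rho)}\lesssim\rho^{\alpha+s-\theta}$, and sends $\rho\to\infty$ to force $[w]_{C_{t,x}^{\frac{\alpha+\beta+s}{2s},\alpha+\beta+s}}=0$, from which the polynomial form is read off all at once. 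You instead fix $(\tau,h)$, apply a whole-space Liouville theorem below growth $2s$ to each $v_{\tau,h}$ to get the affine form $p_{\tau,h}\cdot x+q_{\tau,h}$, and then use the cocycle identity together with the symmetry of the constraint to show $p_{\tau,h}=Mh$ ($M$ symmetric, independent of $\tau$) and $q_{\tau,h}=a\tau+c\cdot h+\tfrac12 h^TMh$, recovering $w$ as a second-degree polynomial. Your Steps 2 and 3 are correct as written (the cocycle manipulation and the growth comparisons $|a|R^{2s-\beta}$, $\|A\|R^{2-\beta}$, $|p|R^{1-\beta}\lesssim R^{\alpha+s}$ all check out). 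The one genuine soft spot is exactly the one you flag yourself: the whole-space parabolic Liouville theorem for the increment below growth $2s$ is not quoted verbatim anywhere, and you would need to either carry out the blow-down/interior-estimate argument there, or fold the two steps together as the paper does. In effect, the paper's proof is that Liouville argument with the incremental-quotient lemma built in, so the overall mathematical content is similar; your reorganization via the cocycle is cleaner conceptually but not shorter once the auxiliary Liouville step is filled in.
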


\begin{proof}
 Let $\rho>0$ and define
\[
v(t,x)=\frac{w(\rho^{2s}(t+\tau),\rho (x+h))-w(\rho^{2s}t,\rho x)}{\rho^{\alpha+s}(|\rho\tau|^{\beta/2s}+|\rho h|^{\beta})}.
\]
Note that
\[
 \partial_tv-Lv=0
\]
in $Q_1$ and, by \eqref{eq.betagro},
\[
 \|v\|_{L^{\infty}(Q_R)}\leq CR^{\alpha+s}
\]
for all $R\geq 1$ and in particular
\[
 \|v\|_{L^{\infty}(Q_1)}\leq C.
\]
Then, by the interior estimates of Theorem 1.3 in \cite{FR1} with can get, for $\theta>\alpha+s$
\[
 \|v\|_{C_{t,x}^{\theta/2s,\theta}(Q_{1/2})}\leq C.
\]
Hence, the incremental quotients of order $\beta$ of $w$ are uniformly bounded in $C_{t,x}^{\theta/2s,\theta}$. 
This implies (see \cite{CC}, Lemma 5.6) $w\in C_{t,x}^{\frac{\theta+\beta}{2s},\theta+\beta}$ and scaling back we get
\[
 [w]_{C_{t,x}^{\frac{\alpha+\beta+s}{2s},\alpha+\beta+s}(Q_\rho)}\leq [w]_{C_{t,x}^{\frac{\theta+\beta}{2s},\theta+\beta}(Q_\rho)}\leq \rho^{\alpha+s-\theta}
\]
and when we let $\rho\rightarrow\infty$ we get
\[
 [w]_{C_{t,x}^{\frac{\alpha+\beta+s}{2s},\alpha+\beta+s}((-\infty,0)\times\R^n)}=0.
\]
This gives the desired result.
\end{proof}

The next proposition is the main tool to prove Proposition \ref{prop.interior}.

\begin{prop}\label{prop.blowup}
Let $s\in(0,1)$, $\gamma\in(0,s)$, $\beta\in(\gamma,1)$ and $\alpha=s+\gamma-\beta\in(0,s)$.
Let $w$ be a solution of
\[
\partial_tw-Lw=f\quad\textrm{ in } Q_1
\]
with $L$ an operator of the form \eqref{eq.L}. 
Assume $w\in C^{\frac{\beta}{2s},\beta}_{t,x}((-\infty,0)\times\R^n)$, $a\in C^{1,\gamma}(S^{n-1})$ and $f\in C^{\frac{\gamma}{2s},\gamma}_{t,x}(Q_1)$. 
Then
\begin{equation}\label{eq.blow}
\sup_{r>0} r^{-\alpha-s}[w-(a_rt+x^TA_r x+p_r\cdot x+q_r)]_{C_{t,x}^{\frac{\beta}{2s},\beta}(Q_r)}\leq C [w]_{C_{t,x}^{\frac{\beta}{2s},\beta}((-\infty,0)\times\R^n)}
\end{equation}
where
\begin{equation}\label{eq.prblow}
a_rt+x^TA_r x+p_r\cdot x+q_r=\arg\min_{\mathcal{P}}\int_{-r^{2s}}^0\int_{B_r}(w-(at+x^TAx+p\cdot x+q))^2\:dxdt.
\end{equation}
where $\mathcal{P}$ is the space of all polynomials of degree at most $\lfloor (\alpha+\beta+s)/2s\rfloor$ in $t$ and $\lfloor \alpha+\beta+s\rfloor$ in $x$ (i.e., and $a_r=0$ if $\alpha+\beta<s$, $A_r=0$ if $\alpha+\beta+s<2$, and $p_r=0$ if $\alpha+\beta+s<1$).

In particular
\begin{equation}\label{eq.intest}
[w]_{C_{t,x}^{\frac{\alpha+\beta+s}{2s},\alpha+\beta+s}(Q_{1/2})}\leq C[w]_{C_{t,x}^{\frac{\beta}{2s},\beta}((-\infty,0)\times\R^n)}.
\end{equation}
\end{prop}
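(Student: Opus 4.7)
\medskip

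\noindent\textbf{Proof plan for Proposition \ref{prop.blowup}.} The plan is to mimic the compactness/blow-up scheme used for Proposition \ref{prop.mainprop}, but now in the interior setting and with the scale-invariant Liouville Lemma \ref{lem.liou} playing the role of Theorem \ref{thm.liou}. Suppose \eqref{eq.blow} fails. Then we can find sequences of operators $L_k$ with $a_k\in C^{1,\gamma}(S^{n-1})$, right-hand sides $f_k\in C^{\gamma/2s,\gamma}_{t,x}(Q_1)$, and solutions $w_k$ of $\partial_t w_k-L_k w_k=f_k$ in $Q_1$ with $[w_k]_{C^{\beta/2s,\beta}_{t,x}((-\infty,0)\times\R^n)}\leq 1$, but such that the supremum over $k$ and $r>0$ of
\[
r^{-\alpha-s}\,\bigl[w_k-(a_{r,k}t+x^{T}A_{r,k}x+p_{r,k}\cdot x+q_{r,k})\bigr]_{C^{\beta/2s,\beta}_{t,x}(Q_r)}
\]
is $+\infty$, where the polynomial is the minimizer in \eqref{eq.prblow}. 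Set
\[
\theta(r):=\sup_{k}\sup_{r'>r}(r')^{-\alpha-s}\bigl[w_k-(a_{r',k}t+x^{T}A_{r',k}x+p_{r',k}\cdot x+q_{r',k})\bigr]_{C^{\beta/2s,\beta}_{t,x}(Q_{r'})},
\]
which is finite for each $r>0$, nondecreasing, and tends to $+\infty$ as $r\downarrow 0$. Choose $r_m\downarrow 0$ and indices $k_m$ for which the supremum is at least $\tfrac12\theta(r_m)$, and define the blow-ups
\[
v_m(t,x):=\frac{w_{k_m}(r_m^{2s}t,r_m x)-\bigl(a_mr_m^{2s}t+r_m^2 x^{T}A_m x+r_m p_m\cdot x+q_m\bigr)}{r_m^{\alpha+\beta+s}\,\theta(r_m)},
\]
where $(a_m,A_m,p_m,q_m)$ stands for the minimizing polynomial at scale $r_m$ for $w_{k_m}$.

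\medskip

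\noindent The next step is to establish a growth bound of the form $[v_m]_{C^{\beta/2s,\beta}_{t,x}(Q_R)}\leq C R^{\alpha+s}$ for $1\leq R\leq 1/r_m$, together with the normalizations $[v_m]_{C^{\beta/2s,\beta}_{t,x}(Q_1)}\geq 1/2$ and the orthogonality conditions
\[
\int_{Q_1} v_m\bigl(at+x^{T}Ax+p\cdot x+q\bigr)\,dx\,dt=0
\]
for every admissible polynomial, both coming from the very definition of the least-squares projection in \eqref{eq.prblow}. As in Proposition \ref{prop.mainprop}, the growth bound is obtained by controlling the differences between the minimizing polynomial coefficients at dyadic scales $2^{j}r_m$ via the monotonicity of $\theta$, and then using the triangle inequality in the seminorm together with the parabolic scaling of the polynomial pieces. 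Combining this with the normalization $\int\cdot=0$ one upgrades the seminorm bound to an $L^\infty$ bound $\|v_m\|_{L^\infty(Q_R)}\leq C R^{\alpha+\beta+s}$.

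\medskip

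\noindent By Arzel\`a--Ascoli a subsequence of $v_m$ converges locally uniformly on $(-\infty,0]\times\R^n$ to some $v$ inheriting the same global growth. To identify the limiting equation, pick any $\tau<0$ and $h\in\R^n$ and look at the incremental quotient $V_m(t,x):=v_m(t+\tau,x+h)-v_m(t,x)$. A direct computation shows
\[
(\partial_t-L_{k_m})V_m(t,x)=\frac{r_m^{2s}\bigl(f_{k_m}(r_m^{2s}(t+\tau),r_m(x+h))-f_{k_m}(r_m^{2s}t,r_m x)\bigr)}{r_m^{\alpha+\beta+s}\theta(r_m)},
\]
because the polynomial pieces cancel after differencing (the quadratic part produces a time- and space-independent constant after $\partial_t-L$, which in turn is killed by the finite difference). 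Using $f_{k_m}\in C^{\gamma/2s,\gamma}_{t,x}$ and $\alpha+\beta=s+\gamma$, the right-hand side is bounded in absolute value by $C/\theta(r_m)\to 0$. Passing to the limit via Lemma \ref{lem.limwm} (whose third hypothesis is supplied by the growth $\|v_m\|_{L^\infty(Q_R)}\leq CR^{\alpha+\beta+s}$ with $\alpha+\beta+s<2s$), and by uniform convergence of a subsequence of $a_{k_m}$ to some $a$, we conclude that $v$ satisfies
\[
(\partial_t-L)\bigl(v(\cdot+\tau,\cdot+h)-v(\cdot,\cdot)\bigr)=0\quad\text{in }(-\infty,0)\times\R^n
\]
for every $h\in\R^n$, $\tau<0$, and some operator $L$ of the form \eqref{eq.L}. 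Together with $[v]_{C^{\beta/2s,\beta}_{t,x}(Q_R)}\leq CR^{\alpha+s}$, Lemma \ref{lem.liou} forces $v(t,x)=at+x^{T}Ax+p\cdot x+q$ with degree constraints precisely matching the space $\mathcal{P}$ in \eqref{eq.prblow}. The orthogonality relations then give $v\equiv 0$, contradicting $[v]_{C^{\beta/2s,\beta}_{t,x}(Q_1)}\geq 1/2$ obtained in the limit from $[v_m]_{C^{\beta/2s,\beta}_{t,x}(Q_1)}\geq 1/2$. This proves \eqref{eq.blow}.

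\medskip

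\noindent For \eqref{eq.intest}, the estimate \eqref{eq.blow} together with the analogue of Proposition \ref{prop.plane} (a direct adaptation to the present polynomial class, whose proof is the same summation-of-geometric-series argument as Lemma 7.4 in \cite{RS2}) yields a single polynomial $P(t,x)=at+x^{T}Ax+p\cdot x+q$ such that $[w-P]_{C^{\beta/2s,\beta}_{t,x}(Q_r)}\leq Cr^{\alpha+s}[w]_{C^{\beta/2s,\beta}_{t,x}((-\infty,0)\times\R^n)}$ for all $r\leq 1/2$, with $|a|,\|A\|,|p|,|q|\leq C[w]_{C^{\beta/2s,\beta}_{t,x}}$. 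Standard interpolation between polynomial approximation in $L^\infty$ at every scale and the $C^{\beta/2s,\beta}$ seminorm control (cf.\ Remark \ref{rmk.seminorm} for the meaning of $C^{(\alpha+\beta+s)/2s,\alpha+\beta+s}$ when the order exceeds $1$ or $2$) then gives \eqref{eq.intest}.

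\medskip

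\noindent The main technical obstacle is the bookkeeping for the dyadic coefficient estimates on $(a_{r,k},A_{r,k},p_{r,k},q_{r,k})$, which must be performed separately according to whether $\alpha+\beta+s$ lies in $(0,1)$, $(1,2)$ or $(2,3)$, since different coefficients are allowed to be nonzero in each regime; this parallels the case distinction at the end of the proof of Proposition \ref{prop.mainprop}, but is more involved here because the polynomial class contains time and quadratic spatial terms as well.
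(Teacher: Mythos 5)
Your overall scheme---contradiction, dyadic growth bound on the coefficient differences, blow-up, Arzel\`a--Ascoli, Lemma \ref{lem.limwm}, and Lemma \ref{lem.liou}, followed by the orthogonality relations to force $v\equiv 0$---matches the paper's proof step by step, and the eventual contradiction argument is the correct one. Two details, however, are stated incorrectly and would not survive a referee.

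First, your parenthetical justification that the polynomial pieces drop out after differencing reverses the order of operations. You cannot apply $\partial_t-L$ to the quadratic $x^{T}A_m x$ directly: $L(x^{T}A_m x)$ is a divergent integral for any $s<1$ since the tail $\int_{|y|>1}|y|^{2-n-2s}\,dy=\infty$. The correct argument takes the finite difference first, observing that $P(t+\tau,x+h)-P(t,x)$ is linear in $x$ and constant in $t$; then $\partial_t$ of it vanishes and $L$ applied to a linear function vanishes identically in the symmetric-difference form \eqref{eq.L}. This is what the paper's remark about ``the operator is well defined on linear functions and vanishes identically'' is pointing at.

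Second, your claim that condition (3) of Lemma \ref{lem.limwm} is supplied by the bound $\|v_m\|_{L^\infty(Q_R)}\leq CR^{\alpha+\beta+s}$ ``with $\alpha+\beta+s<2s$'' is wrong: since $\alpha+\beta=s+\gamma$, we have $\alpha+\beta+s=2s+\gamma>2s$, so this $L^\infty$ bound on $v_m$ is too weak to verify (3). What actually gives (3) is the seminorm bound $[v_m]_{C^{\beta/2s,\beta}_{t,x}(Q_R)}\leq CR^{\alpha+s}$ applied to the fixed increment $(\tau,h)$: it yields $|v_m(t+\tau,x+h)-v_m(t,x)|\leq C(1+|x|^{\alpha+s})$, and $\alpha+s<2s$ precisely because $\alpha\in(0,s)$. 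With these two corrections your proof coincides with the paper's.
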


\begin{proof}
The proof is similar to that of Proposition \ref{prop.mainprop}. 
Assume \eqref{eq.blow} does not hold. 
Then there are sequences $w_k$, $L_k$ and $f_k$ satisfying:
\begin{itemize}
 \item $L_k$ is of the form \eqref{eq.L} and $a_k\in C^{1,\gamma}(S^{n-1})$
 \item $w_k\in C_{t,x}^{\beta/2s,\beta}((-\infty,0)\times\R^n)$ and $f_k\in C_{t,x}^{\gamma/2s,\gamma}(Q_1)$
 \item $w_k$ is a solution of
\begin{equation}\label{eq.eqwk}
 \partial_tw_k-L_kw_k=f_k\quad\textrm{ in } Q_1
\end{equation}
\end{itemize}
such that
\[
\sup_k\:\sup_{r>0}\:r^{-\alpha-s}[w_k-(a_{r,k}t+x^TA_{r,k}x+p_{r,k}\cdot x+q_{r,k})]_{C_{t,x}^{\beta/2s,\beta}(Q_r)}=\infty.
\]
with
\[
a_{r,k}t+x^TA_{r,k}x+p_{r,k}\cdot x+q_{r,k}=\arg\min_{\mathcal{P}}\int_{-r^{2s}}^0\int_{B_r}(w_k-(at+x^TAx+p\cdot x+q))^2\:dxdt.
\]
Define
\[
\theta(r):=\sup_k\:\sup_{r'>r} (r')^{-\alpha-s}[w_k-(a_{r',k}t+x^TA_{r',k}x+p_{r',k}\cdot x+q_{r',k})]_{C_{t,x}^{\frac{\beta}{2s},\beta}(Q_{r'})}.
\]
As before, $\theta$ is a nondecreasing function of $r$ that goes to $\infty$ as $r$ goes to 0. 
Moreover, since for any fixed $r$ we have $\theta(r)<\infty$, if we take the sequence $1/m$
there are $r'_m$ and $k_m$ such that $r'_m\geq1/m$ and
\[
(r_m')^{-\alpha-s}[w_{k_m}-(a_{r_m',k_m}t+x^TA_{r_m',k_m}x+p_{r_m',k_m}\cdot x+q_{r_m',k_m})]_{C_{t,x}^{\beta/2s,\beta}(Q_{r_m'})}\geq \frac{1}{2}\theta(1/m)\geq\frac{1}{2}\theta(r'_m).
\]
Let's denote
\[
w_m=w_{k_m} \quad A_m=A_{r_m',k_m} \quad p_m=p_{r_m',k_m} \quad q_m=q_{r_m',k_m} \quad a_m=a_{r_m',k_m}
\]
to make the notation cleaner, and further denote
\[
P_{r,m}(t,x)=a_mr^{2s}t+x^TA_mxr^2+p_m\cdot rx+q_m.
\] 

With this notation we define the blow-up sequence
\[
v_m(t,x)=\frac{w_m((r'_m)^{2s}t,r'_mx)-P_{r'_m,m}(t,x)}{(r'_m)^{\alpha+\beta+s}\theta(r'_m)}.
\]
Notice that
\begin{equation}\label{eq.boundvbelblow}
[v_m]_{C_{t,x}^{\beta/2s,\beta}(Q_1)}\geq 1/2
\end{equation}
and that
\begin{equation}\label{eq.ortblow}
\int_{-1}^0\int_{B_1}v_m(at+x^TAx+p\cdot x+q)\:dx\:dt=0
\end{equation}
for all $A\in\mathbb{R}^{n\times n}$, $p\in\mathbb{R}^n$ and $a,q\in\mathbb{R}$ because of the minimization condition \eqref{eq.prblow}. 

Next we show that 
\begin{equation}\label{eq.boundvblow}
[v_m]_{C^{\beta/2s,\beta}(Q_R)}\leq CR^{\alpha+s}
\end{equation}
for any $R\geq 1$. As before, this requires an estimate of the form 
\begin{equation}\label{eq.quad}
[P_{Rr'_m,m}-P_{r'_m,m}]_{C_{t,x}^{\beta/2s,\beta}(Q_{Rr'_m})}\leq \theta(r'_m)(Rr_m')^{\alpha+s}.
\end{equation}
The only difference between the proof \eqref{eq.quad} and \eqref{eq.linear} is the presence of a quadratic term so, to keep the presentation clear, let us assume for simplicity that the linear terms vanish and prove the bound for the quadratic term.

Hence, proceeding dyadically as in Proposition \ref{prop.mainprop}, we have to show that 
\[
[x^TA_{2^kr'_m,m}x-x^TA_{r'_m,m}x]_{C_{t,x}^{\beta/2s,\beta}(Q_{Rr'_m})}\leq C\theta(r'_m)(Rr_m')^{\alpha+s}.
\]

Let us start with $k=1$ and noticing that  
\begin{eqnarray*}
\frac{1}{(r'_m)^{\alpha+s}\theta(r'_m)}|x^T(A_{2r'_m,m}-A_{r'_m,m})x|(r'_m)^{2-\beta} & \leq & \frac{1}{(r'_m)^{\alpha+s}\theta(r'_m)}[x^TA_{2r'_m,m}x-x^TA_{r'_m,m}x]_{C_{t,x}^{\beta/2s,\beta}(Q_{r'_m})} \\
                                                                  & \leq & \frac{2^{\alpha+s}\theta(2r'_m)}{\theta(r'_m)}\frac{[w_m-x^TA_{2r'_m,m}x]_{C_{t,x}^{\beta/2s,\beta}(Q_{2r'_m})}}{(2r'_m)^{\alpha+s}\theta(2r'_m)} \\
								 &  & \frac{1}{(r'_m)^{\alpha+s}\theta(r'_m)}[w_m-x^TA_{r'_m,m}x]_{C_{t,x}^{\beta/2s,\beta}(Q_{r'_m})} \\
                                                                  & \leq & C
\end{eqnarray*}
so that, denoting by $\|\cdot\|$ is the $L^2$ matrix norm, i.e. $\|M\|=\sup_{|x|=1}|x^TMx|$ and taking supremum over $x$
\[
\|A_{2r'_m,m}-A_{r'_m,m}\|\leq C(r'_m)^{\alpha+\beta+s-2}\theta(r'_m).
\]
But on the other hand, 
\[
[x^TA_{2r'_m,m}x-x^TA_{r'_m,m}x]_{C_{t,x}^{\beta/2s,\beta}(Q_{2r'_m})}\leq C\|A_{2r'_m,m}-A_{r'_m,m}\|(r'_m)^{2-\beta}
\]
so the result follows in this case. For $R=2^k$ just use a telescopic sum as in Proposition \ref{prop.mainprop}.

Using \eqref{eq.quad} we have
\begin{eqnarray*}
[v_m]_{C_{t,x}^{\beta/2s,\beta}(Q_R)} & = & \frac{1}{(r'_m)^{\alpha+s}\theta(r'_m)}[w_m-P_{r'_m,m}]_{C_{t,x}^{\beta/2s,\beta}(Q_{Rr'_m})} \\
			& = & \frac{R^{\alpha+s}}{(Rr'_m)^{\alpha+s}\theta(r'_m)}[w_m-P_{r'_m,m}]_{C_{t,x}^{\beta/2s,\beta}(Q_{Rr'_m})} \\
                         & \leq & \frac{R^{\alpha+s}}{(Rr'_m)^{\alpha+s}\theta(r'_m)}[w_m-P_{Rr'_m,m}]_{C_{t,x}^{\beta/2s,\beta}(Q_{Rr'_m})}\\
                         &  & +\frac{R^{\alpha+s}}{(Rr'_m)^{\alpha+s}\theta(r'_m)}[P_{Rr'_m,m}-P_{r'_m,m}]_{C_{t,x}^{\beta/2s,\beta}(Q_{Rr'_m})} \\ 
                         & \leq & \frac{R^{\alpha+s}\theta(Rr'_m)}{\theta(r'_m)}+C\frac{R^{\alpha+s}}{(Rr'_m)^{\alpha+s}\theta(r'_m)}(Rr'_m)^{\alpha+s}\theta(r'_m)\\
			& \leq & CR^{\alpha+s}
\end{eqnarray*}
and we get \eqref{eq.boundvblow}. From \eqref{eq.boundvblow} the bound
\begin{equation}\label{eq.unifvblow}
\|v_m\|_{L^\infty{(Q_R)}}\leq CR^{\alpha+\beta+s}
\end{equation}
follows as in Proposition \ref{prop.mainprop}.

By the Arzel\`a-Ascoli theorem, \eqref{eq.boundvblow} and \eqref{eq.unifvblow} imply that a subsequence of $\{v_m\}_m$ converges uniformly on compact subsets of $(-\infty,0)\times\mathbb{R}^n$ to
a uniformly continuous function $v$. 
Let's check that $v$ satisfies the hypotheses of Lemma \ref{lem.liou}. 
As in Proposition \ref{prop.mainprop} we have
\[
(\partial_t-L_{k_m})(v_m(t+\tau,x+h)-v_m(t,x))\longrightarrow 0
\]
as $m\rightarrow\infty$ (notice that the quadratic term can only appear when $s>1/2$ and in this case the operator is well defined on linear functions and vanishes identically). So, again using Lemma \ref{lem.limwm}
\[
(\partial_t-L)(v(t+\tau,x+h)-v(t,x))=0
\]
for some $L$ of the form \eqref{eq.L}. 
Finally, because of \eqref{eq.boundvblow}, $v$ also satisfies the growth condition of Lemma \ref{lem.liou}. Hence
\[
v(t,x)=at+x^TAx+p\cdot x+q,
\]
and by \eqref{eq.ortblow} we obtain $v\equiv0$. 
But passing to the limit in \eqref{eq.boundvbelblow} we get a contradiction, so \eqref{eq.blow} holds. The fact that \eqref{eq.blow} implies \eqref{eq.intest} is quite standard.
\end{proof}

\begin{proof}[Proof of Proposition \ref{prop.interior}]
Fix $r>0$ and consider the following rescaling of $w$:
\[
 \tilde{w}(t,x)=\frac{1}{r^{\alpha+\beta+s}}w(r^{2s}t,rx).
\]
Then $\tilde{w}$ will satisfy
\[
 \|\tilde{w}\|_{L^\infty((-\infty,0)\times\R^n)}<\infty\quad\textrm{ and }  \|\tilde{w}\|_{L^\infty(Q_R)}\leq CR^{\alpha+\beta+s}.
\]
Let now $\eta$ be a cut-off function in space that vanishes outside $B_1(e_n)$. 
More precisely, let $\eta\in C^\infty_c(B_1(e_n))$ with $\eta\equiv 1$ in $B_{5/6}(e_n)$.
Notice that $\tilde{w}\eta$ satisfies an equation like the one in Proposition \ref{prop.blowup}. 
Indeed, because $a\in C^{1,\gamma}(S^{n-1})$ we have that,
\[
 \partial_t\tilde{w\eta}-L\tilde{w}\eta=\tilde{f}\quad\textrm{ in }\quad (-1,0)\times B_{3/4}(e_n)
\]
and hence \eqref{eq.intest} gives (recall $\tilde{w}=\tilde{w}\eta$ in $B_{1/2}$)
\[
[\tilde{w}]_{C_{t,x}^{\frac{\alpha+\beta+s}{2s},\alpha+\beta+s}((-1,0)\times B_{1/2}(e_n))}\leq C[\tilde w]_{C_{t,x}^{\frac{\beta}{2s},\beta}((-1,0)\times B_1(e_n))},
\]
and in particular also
\[
[\tilde{w}]_{C_{t,x}^{\frac{\alpha+\beta}{2s},\alpha+\beta}((-1,0)\times B_{1/2}(e_n))}\leq C[\tilde w]_{C_{t,x}^{\frac{\beta}{2s},\beta}((-1,0)\times B_1(e_n))}.
\]
Rescaling this estimate back we get
\[
[w]_{C_{t,x}^{\frac{\alpha+\beta+s}{2s},\alpha+\beta+s}((-r^{2s},0)\times B_{r/2}(e_n))}\leq r^{-\alpha-s}C[w]_{C_{t,x}^{\frac{\beta}{2s},\beta}((-1,0)\times B_r(e_n))}\leq Cr^{-\alpha-s}r^{s+\alpha}=C
\]
and
\[
[w]_{C_{t,x}^{\frac{\alpha+\beta}{2s},\alpha+\beta}((-r^{2s},0)\times B_{r/2}(e_n))}\leq r^{-\alpha}C[w]_{C_{t,x}^{\frac{\beta}{2s},\beta}((-1,0)\times B_r(e_n))}\leq Cr^{-\alpha}r^{s+\alpha}=Cr^s,
\]
as wanted.
\end{proof}

\end{document}